\documentclass[12pt, a4paper]{article}

\usepackage{a4wide}
\usepackage[utf8]{inputenc}
\usepackage[english]{babel}
\usepackage{amsthm}
\usepackage{amsmath}
\usepackage{amssymb}
\usepackage{url}
\usepackage{enumerate}

\theoremstyle{plain}
\newtheorem{thm}{Theorem}
\newtheorem{lemma}{Lemma}

\theoremstyle{definition}

\theoremstyle{remark}

\newcommand{\F}{\mathcal{F}}
\newcommand{\N}{\mathbb{N}}
\renewcommand{\S}{\mathcal{S}}
\renewcommand{\P}{\mathbb{P}}

\begin{document}
\title{On the typical values of the cross-correlation measure}
\author{L\'aszl\'o M\'erai \\
{\small Johann Radon Institute for Computational and Applied Mathematics}\\
{ \small Austrian Academy of Sciences}\\
{ \small Altenbergerstr.\ 69, 4040 Linz, Austria}\\
{ \small e-mail: \texttt{merai@cs.elte.hu} }}
\maketitle

\begin{abstract}
Gyarmati, Mauduit and S\'ark\"ozy introduced the \textit{cross-correlation measure} $\Phi_k(\F)$ to measure the randomness of families of binary sequences $\F \subset \{-1,1\}^N$.

In this paper we study the order of magnitude of the cross-correlation measure $\Phi_k(\F)$ for typical families.
We prove that, for most families $\F \subset \{-1,1\}^N$ of size $2\leq |\F|<2^{N/12}$, $\Phi_k(\F)$ is of order $\sqrt{N\log \binom{N}{k}+k\log |\F|}$ for any given $2\leq k \leq N/(6\log_2 |\F|)$.
\end{abstract}

\textit{2000 Mathematics Subject Classification: Primary} 11K45, 68R15

\textit{Key words and phases:} pseudorandom, binary sequence,correlation measure, cross-correlation measure
\let\thefootnote\relax\footnote{The final publication is available at Springer via \url{http://dx.doi.org/10.1007/s00605-016-0886-0}}

\section{Introduction}

Recently, in a series of papers the pseudorandomness of \textit{finite binary sequences} $E_N=(e_1,e_2,\dots, e_N)\in\{-1,1\}^N$ has been studied. In particular measures of pseudorandomness have been defined and investigated; see \cite{AKMMR,CMS,measures-survay,measures} and the references therein.

For example, Mauduit and S\'ark\"ozy \cite{measures} introduced the \textit{correlation measure of order $k$} $C_k(E_N)$ of the sequences $E_N$. Namely, for a $k$-tuple $D=(d_1,d_2,\dots, d_k)$ with non-negative integers $0\leq d_1 < d_2<\dots <d_k<N$ and $M\in\mathbb{N}$ with $M +d_k\leq N$ write
\begin{equation*}
 V_k(E_N,M,D)=\sum_{n=1}^M e_{n+d_1}e_{n+d_2}\dots e_{n+d_{k}}.
\end{equation*}
Then $C_k(E_N)$ is defined as
\begin{equation*}
 C_{k}(E_N)=\max_{M,D}\left| V(E_N,M,D) \right| =\max_{M,D}\left|
\sum_{n=1}^M e_{n+d_1}e_{n+d_2}\dots e_{n+d_{k}} \right|.
\end{equation*}

Cassaigne,  Mauduit and  S\'ark\"ozy \cite{CMS} studied the typical values of $C_k(E_N)$, when the binary sequences $E_N$ are chosen equiprobably from $\{-1,1\}^N$. Later Alon, Kohayakawa,  Mauduit,  Moreira and  R\"odl \cite{AKMMR} improved their result.

\begin{thm}\label{thm:AKMMR}
 For fixed $0<\varepsilon_0\leq 1/16$, there is a constant $N_0=N_0(\varepsilon_0)$ such that if $N\geq N_0$, then, with probability at least $1-\varepsilon_0$, we have
 \begin{align*}
  \frac{2}{5}\sqrt{N \log \binom{N}{k}}< C_k(E_N)&<\sqrt{(2+\varepsilon_1)N \log \left(N \binom{N}{k}\right)}\\
  & <\sqrt{(3+\varepsilon_0)N \log  \binom{N}{k}}<\frac{7}{4}\sqrt{N \log  \binom{N}{k}}.
 \end{align*}
 for every integer $k$ with $2\leq k \leq N/4$, where $\varepsilon_1=\varepsilon_1(N)=(\log \log N)/\log N$. 
\end{thm}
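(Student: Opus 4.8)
The plan is to treat the two bounds separately, with a single distributional lemma serving as the common engine. The key observation is that, for each fixed admissible pair $(M,D)$, the correlation sum $V_k(E_N,M,D)=\sum_{n=1}^M e_{n+d_1}\cdots e_{n+d_k}$ has \emph{exactly} the distribution of a simple $\pm1$ random walk of length $M$. To see this I would compute the moment generating function by peeling off the largest index: the coordinate $e_{M+d_k}$ occurs in one and only one of the summands (namely the $n=M$ term, since $n+d_j=M+d_k$ with $n\le M$ and $d_j\le d_k$ forces $n=M$, $j=k$), so conditioning on all other coordinates gives $\mathbb{E}[e^{\lambda V_k(E_N,M,D)}]=\cosh(\lambda)\,\mathbb{E}[e^{\lambda V_k(E_N,M-1,D)}]$, and induction yields $\mathbb{E}[e^{\lambda V_k(E_N,M,D)}]=(\cosh\lambda)^M$. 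As this is the moment generating function of a sum of $M$ independent Rademacher variables, the distributions coincide, and both the Chernoff upper tail $\P(|V_k(E_N,M,D)|\ge t)\le 2e^{-t^2/(2M)}\le 2e^{-t^2/(2N)}$ and the matching large-deviation lower tail become available with exact rates.

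For the upper bound I would run a union bound over all tests. There are at most $N\binom{N}{k}$ admissible pairs $(M,D)$ for a given $k$, so with the threshold $T_k=\sqrt{(2+\varepsilon_1)N\log(N\binom{N}{k})}$ the per-$k$ failure probability is at most $N\binom{N}{k}\cdot 2(N\binom{N}{k})^{-1-\varepsilon_1/2}=2(N\binom{N}{k})^{-\varepsilon_1/2}$. Summing over $2\le k\le N/4$, the dominant contribution comes from the smallest $k$ (where $\binom{N}{k}$ is smallest); using $\binom{N}{k}\ge(N/k)^k$ and $N^{-\varepsilon_1/2}=(\log N)^{-1/2}$ one checks this sum is $O((\log N)^{-1})\to0$, hence below $\varepsilon_0/2$ once $N\ge N_0$. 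The displayed chain of inequalities is then elementary: $(2+\varepsilon_1)\log(N\binom{N}{k})<(3+\varepsilon_0)\log\binom{N}{k}$ follows from $\log\binom{N}{k}\ge\log\binom{N}{2}\sim 2\log N$ together with $\varepsilon_0>0$ and $\varepsilon_1\to0$, while $3+\varepsilon_0\le(7/4)^2=49/16$ uses exactly $\varepsilon_0\le1/16$ (the overall strictness being inherited from the previous, strict, step).

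The lower bound is the substantial part. Fixing $k$, I would choose a length $M=\Theta(N)$ and a large family $\mathcal{D}$ of $k$-tuples $D$ with $d_k\le N-M$, so that $\log|\mathcal{D}|$ is a constant multiple of $\log\binom{N}{k}$, and study the counting variable $W=\#\{D\in\mathcal{D}:V_k(E_N,M,D)\ge\lambda\}$ with $\lambda=\tfrac{2}{5}\sqrt{N\log\binom{N}{k}}$. The distributional lemma gives a clean lower bound on $p:=\P(V_k(E_N,M,D)\ge\lambda)$ through the exact random-walk rate function, and hence $\mathbb{E}W=|\mathcal{D}|\,p$ is large; the constant $2/5$ is chosen so that this persists across the whole range $2\le k\le N/4$, including the genuinely large-deviation regime (where $\lambda$ is a positive fraction of $M$ and the Gaussian approximation is invalid) that occurs for $k$ of order $N$. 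A second moment (Paley--Zygmund) argument then shows $W>0$ with high probability, which forces $C_k(E_N)\ge\lambda$.

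The main obstacle is the variance estimate $\mathrm{Var}(W)=\sum_{D,D'\in\mathcal{D}}\big(\P(V_k(E_N,M,D)\ge\lambda,\,V_k(E_N,M,D')\ge\lambda)-p^2\big)$. Genuine independence of the $V_k(E_N,M,D)$ across all of $\mathcal{D}$ is impossible, since only $N$ bits of randomness are available, so at most $O(N/\log N)$ of these sums can be jointly independent; that would yield only the factor $\sqrt{N\log N}$ rather than the required $\sqrt{N\log\binom{N}{k}}$. The extra factor must therefore be extracted from correlated sums, and the crux is to show that the off-diagonal terms are negligible. Here I would use that $\mathrm{Cov}(V_k(E_N,M,D),V_k(E_N,M,D'))$ equals the number of coincidences $\{n+d_1,\dots,n+d_k\}=\{n'+d'_1,\dots,n'+d'_k\}$, an additive count that vanishes for all but a sparse set of resonant pairs $(D,D')$; bounding the number of such pairs and controlling their joint large-deviation probability (again via the explicit random-walk structure) should give $\mathrm{Var}(W)=o((\mathbb{E}W)^2)$, in fact a failure probability $\P(W=0)\le\binom{N}{k}^{-c}$ that is summable over $2\le k\le N/4$. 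Assembling the upper and lower estimates and discarding an event of probability at most $\varepsilon_0$ then yields the theorem.
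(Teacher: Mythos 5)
First, note that the paper itself does not prove Theorem~\ref{thm:AKMMR}: it is quoted verbatim from Alon--Kohayakawa--Mauduit--Moreira--R\"odl \cite{AKMMR}, so your attempt can only be measured against that proof and against the analogous machinery the paper deploys for Lemma~\ref{lemma:thm2-lower}. Your upper-bound half is correct and is essentially the standard route: the moment-generating-function induction showing $V_k(E_N,M,D)\sim S^{\pm}(M)$ is valid (the coordinate $e_{M+d_k}$ indeed appears exactly once), and the Chernoff-plus-union-bound computation with threshold $\sqrt{(2+\varepsilon_1)N\log(N\binom{N}{k})}$, including the summability over $2\le k\le N/4$ via $\binom{N}{k}\ge (N/k)^k$ and $N^{-\varepsilon_1/2}=(\log N)^{-1/2}$, checks out; this is the same mechanism as the paper's Lemma~\ref{lemma:Spm} and Lemma~\ref{lemma:thm2-upper}.

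The lower bound, however, has a genuine gap at exactly the step you identify as the crux. You propose to control $\mathrm{Var}(W)$ for $W=\#\{D\in\mathcal{D}: V_k(E_N,M,D)\ge\lambda\}$ by computing $\mathrm{Cov}(V_k(\cdot,D),V_k(\cdot,D'))$ and observing that it vanishes except for resonant (translate) pairs. That covariance computation is correct, but it does not bound $\mathrm{Var}(W)$: the variance of $W$ involves the joint \emph{event} probabilities $\mathbb{P}(V_D\ge\lambda,\ V_{D'}\ge\lambda)-p^2$, and zero covariance of two Rademacher chaoses does not make the threshold events decorrelate. A concrete obstruction: $V=e_1e_2+e_3e_4$ and $V'=e_1e_3+e_2e_4$ share no monomial, hence are uncorrelated, yet conditioned on $V=2$ one has $V'\in\{-2,2\}$, so $\mathbb{P}(V'=0\mid V=2)=0\neq\mathbb{P}(V'=0)$. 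In the large-deviation regime you correctly flag (where $\lambda$ is a constant fraction of $M$, for $k$ of order $N$), such event correlations are precisely what can destroy a second-moment argument, and ``controlling their joint large-deviation probability via the random-walk structure'' is asserted rather than proved. What the actual proof does instead is structural: it restricts to anchored tuples $D=(0,d_2,\dots,d_k)$ with the nonzero shifts confined to a window disjoint from the anchor segment $(e_1,\dots,e_m)$, $m=\lfloor N/3\rfloor$, for which the events are \emph{exactly} pairwise independent (Claim~18 of \cite{AKMMR}; the paper's Lemma~\ref{lemma:A-independent} is its cross-correlation analogue), and then applies the pairwise-independence union lemma (Lemma~19 of \cite{AKMMR}, reproduced here as Lemma~\ref{lemma:unio}) together with sharp binomial lower-tail estimates --- de Moivre--Laplace in the central regime and the pointwise bound $2^{-n}\binom{n}{\lfloor n/2\rfloor+c}\ge(1+o(1))2^{-4(c+\{n/2\})^2/n}\sqrt{2/\pi n}$ in the genuinely large-deviation regime (Lemmas~\ref{lemma:binom} and~\ref{lemma:binom=}) --- to make $\mathbb{E}W=|\mathcal{D}|p$ large enough that $1-2/(p|\mathcal{D}|)\to 1$. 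Until you either prove an analogue of that exact pairwise independence for your family $\mathcal{D}$, or supply a quantitative joint-tail bound for uncorrelated pairs strong enough at depth $\lambda=\Theta(M)$, the Paley--Zygmund step does not go through, and with it the constant $2/5$ over the full range $2\le k\le N/4$ remains unproven.
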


Recently, Schmidt \cite{schmidt} showed that for fixed $k$, the correlation measure $C_k$ of order $k$  converges strongly, and so has limiting distribution.

\bigskip

In order to study the pseudorandomness of \textit{families of finite binary sequences} $\F \subset \{-1,1\}^N$, Gyarmati, Mauduit and S\'ark\"ozy \cite{cross-correlation} introduced the notion of the \textit{cross-cor\-re\-la\-tion measure}.

Let $N,k\in\N$, and for any $k$ binary sequences $E_N^{(1)},E_N^{(2)}, \dots , E_N^{(k)}$ with
\[
 E_N^{(i)}=(e_1^{(i)},e_2^{(i)},\dots, e_N^{(i)})\in\{-1,1\}^N, \quad \text{for $i=1,2,\dots,k$},
\]
and any $M\in \N$ and $k$-tuple $D=(d_1,\dots,d_k)$ of non-negative integers with
\begin{equation}\label{cond:M,D}
 0\leq d_1\leq d_2\leq \dots \leq d_k< M+d_k\leq N, 
\end{equation}
write
\[
 V_k\left( E_N^{(1)},E_N^{(2)}, \dots , E_N^{(k)},M,D\right) =\sum_{n=1}^{M} e_{n+d_1}^{(1)}e_{n+d_2}^{(2)} \dots e_{n+d_k}^{(k)}.
\]

Let
\[
 \widetilde{C}_k\left( E_N^{(1)},E_N^{(2)}, \dots , E_N^{(k)}\right)=\max_{M,D}\left| V_k\left( E_N^{(1)},E_N^{(2)}, \dots , E_N^{(k)},M,D\right)\right|,
\]
where the maximum is taken over all $D=(d_1,\dots,d_k)$ and $M\in\N$ satisfying (\ref{cond:M,D}) with the additional restriction that if $E_N^{(i)}=E_N^{(j)}$ for some $i\neq j$, then we must not have $d_i=d_j$. Then the \textit{cross-correlation measure of order $k$} of the family $\F$ of binary sequences $E_N\in\{-1,1\}^N$ is defined as
\begin{equation*}
 \Phi_k (\F)=\max \widetilde{C}_k\left( E_N^{(1)},E_N^{(2)}, \dots , E_N^{(k)}\right),
\end{equation*}
where the maximum is taken over all $k$-tuples of binary sequences  

\[
\left( E_N^{(1)},E_N^{(2)}, \dots , E_N^{(k)}\right), \quad E_N^{(i)}\in\F, \quad \text{for } i=1, \dots, k.
\]

Clearly, for family $\F=\{E_N\}$ of size 1 we have
\[
 \Phi_k (\{E_N\})=C_k(E_N).
\]
On the other hand for general $\F$ we have
\[
  \Phi_k (\F)\geq \max_{E_N\in\F} C_k(E_N).
\]

\section{Typical values of $\Phi_k (\F)$}

In this paper we estimate $\Phi_k (\F)$ for "random" families $\F$ of sequences $E_N$ with given length $N$ and family size $|\F|$, i.e. we choose a family $\F$ from all subsets of $\{-1,1\}^N$ of size $|\F|$ with the same probability.

Clearly, the typical value of $\Phi_k (\F)$ strongly depends on the size of the family $\F$. If $\F$ is large: $|\F|>2^{cN}$ with some $0<c<1/2$, then $\Phi_k (\F)\gg N$ ($c=0.18$ can be chosen, see \cite{cross-correlation}). On the other hand, if $|\F|<2^{cN}$ with $c\leq 1/12 =0.0833\dots$, then the behavior of $\Phi_k (\F)$ can be controlled.

\begin{thm}\label{thm1}
 For a given $\varepsilon >0$, there exists $N_0$, such that if $N>N_0$ and $ 1\leq \log_2 |\F|< N/12$, then we have with probability at least $1-\varepsilon$, that
 \[
  \frac{2}{5}\sqrt{N\left(\log \binom{N}{k}+k\log|\F|\right)} <\Phi_k (\F)<\frac{5}{2} \sqrt{N\left(\log \binom{N}{k}+k\log|\F|\right)}
 \]
for every integer $k$ with $2\leq k \leq N/(6\log_2|\F|)$.
\end{thm}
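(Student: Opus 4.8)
The plan is to treat the two inequalities separately, in both cases working with the set of \emph{admissible configurations}: a configuration is a tuple $c=(E_N^{(1)},\dots,E_N^{(k)},M,D)$ with $E_N^{(i)}\in\F$ and $M,D$ satisfying \eqref{cond:M,D} and the repetition restriction, and I write $V(c)=V_k(E_N^{(1)},\dots,E_N^{(k)},M,D)$, so that $\Phi_k(\F)=\max_c|V(c)|$. Throughout set $\Lambda=\log\binom{N}{k}+k\log|\F|$. Writing $g_1,\dots,g_{|\F|}$ for the independent uniform members of the random family and $f_{s,j}$ for the $j$-th entry of $g_s$, every summand of $V(c)$ is a product $X_n=\prod_{i=1}^k f_{s_i,\,n+d_i}$ of $\pm1$ entries. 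The basic observation is a dichotomy: if the $k$ chosen sequences are pairwise distinct, then the index pairs $\{(s_i,n+d_i)\}_i$ are disjoint across distinct $n$, so $X_1,\dots,X_M$ are independent uniform signs and $V(c)$ is a simple random walk; if some sequence repeats, the $X_n$ become dependent and $V(c)$ behaves like a joint single-sequence correlation. For generic $D$ the $X_n$ are nonetheless pairwise uncorrelated, so $\mathrm{Var}(V(c))=M$ in either case.

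For the upper bound I would union bound over all admissible $c$. Their number is at most $N\cdot|\F|^k\cdot\binom{N+k}{k}$; since $\log N\le\log\binom{N}{k}$ for $k\ge2$ and $\log\binom{N+k}{k}\le\log\binom{N}{k}+k\log|\F|$ in the stated range, this count is at most $e^{2\Lambda}$. For each fixed $c$ I need the sub-Gaussian tail $\P(|V(c)|\ge t)\le 2\exp(-t^2/(2M))\le2\exp(-t^2/(2N))$: when the sequences are distinct this is immediate from Hoeffding's inequality applied to the random walk above, and when they repeat the same bound (up to a factor absorbed by the constant) follows from a $2r$-th moment estimate as in \cite{CMS,AKMMR}, the repetitions contributing only lower-order terms to the count of fully paired index tuples once $k\le N/(6\log_2|\F|)$. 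Taking $t=\tfrac52\sqrt{N\Lambda}$ gives $t^2/(2N)=\tfrac{25}{8}\Lambda$, so the union bound is at most $2e^{2\Lambda-\frac{25}{8}\Lambda}=2e^{-\frac{9}{8}\Lambda}\to0$; the slack between the exponents $2$ and $\tfrac{25}{8}$ comfortably absorbs the counting constant, and the large-deviation correction only shrinks the tails further.

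For the lower bound I would use the second moment method on $N_t=\#\{c:\,V(c)\ge t\}$ with $t=\tfrac25\sqrt{N\Lambda}$, fixing a single value $M=(1-o(1))N$. The key input is a matching lower tail bound $\P(V(c)\ge t)\ge\exp(-(1+o(1))t^2/(2M))$, valid because $t/M$ stays bounded away from $1$ so the Cramér rate function for $\pm1$ sums is $(1+o(1))$ times the Gaussian one; this is a clean binomial estimate when the sequences are distinct and follows from the central limit behaviour of correlation sums (CMS) when they repeat, which is forced once $|\F|<2k$. Since the number of configurations has logarithm $(1-o(1))\Lambda$, this yields $\mathbb{E}[N_t]\ge\exp((1-o(1))\Lambda-(1+o(1))\tfrac{2}{25}\Lambda)=\exp(\tfrac{23}{25}\Lambda(1-o(1)))\to\infty$, the constant $\tfrac25$ being chosen precisely to leave this wide margin. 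It then remains to prove $\mathrm{Var}(N_t)=o(\mathbb{E}[N_t]^2)$, which forces $N_t\ge1$ with high probability, i.e. $\Phi_k(\F)\ge t$.

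The main obstacle is exactly this variance estimate, and it is unavoidable: a single distinct-sequence configuration already occupies about $kM\approx kN$ of the $|\F|N$ available entries, so at most $\approx|\F|/k$ configurations can have pairwise disjoint supports, and a pure independence argument could therefore only produce the term $\log|\F|$, never the required $k\log|\F|$. One is thus forced to exploit the maximum over exponentially (in $k$) many \emph{dependent} configurations. Concretely I would expand $\mathrm{Var}(N_t)=\sum_{c,c'}\mathrm{Cov}(\mathbf 1_{V(c)\ge t},\mathbf 1_{V(c')\ge t})$, discard the disjoint-support pairs (covariance $0$), and group the rest by the number of shared entries, the goal being to show the surviving positive covariances sum to $o(\mathbb{E}[N_t]^2)$. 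This is where the hypotheses $\log_2|\F|<N/12$ and $k\le N/(6\log_2|\F|)$ are genuinely needed, to force typical pairs to share only a negligible portion of their supports so that their joint upper tail essentially factorizes while the heavily overlapping pairs are too rare to matter; carrying out this bookkeeping, uniformly over the distinct-sequence and repeated-sequence regimes, is the technical heart of the argument.
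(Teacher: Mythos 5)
Your upper-bound half is essentially the paper's argument (union bound over all configurations plus a sub-Gaussian tail for each fixed one), and the counting works out as you say. One correction there: you assert that when a sequence repeats among the $E_N^{(i)}$ the summands $X_n$ "become dependent" and propose to fall back on a $2r$-th moment estimate. In fact they remain \emph{exactly} independent uniform signs: for a single sequence with distinct shifts $d_1<\dots<d_{k_j}$ the map $(e_1,\dots,e_N)\mapsto(e_1,\dots,e_{d_{k_j}},\,y_1,\dots,y_{N-d_{k_j}})$ with $y_n=e_{n+d_1}\cdots e_{n+d_{k_j}}$ is a bijection of $\{-1,1\}^N$, so the vector of products is uniform; the paper uses exactly this observation, and Hoeffding then applies verbatim in all cases. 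Your moment-method detour is unnecessary and, as stated, unjustified. A second minor point: you sample $|\F|$ sequences independently, whereas the theorem concerns a uniformly random $|\F|$-element subset of $\{-1,1\}^N$; the paper bridges this by showing the collision probability is $o(1)$ for $|\F|<2^{N/12}$ and conditioning, and some such step is needed in your write-up too.

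The real gap is the lower bound. You reduce it to $\mathrm{Var}(N_t)=o(\mathbb{E}[N_t]^2)$ for the count $N_t$ of configurations over the \emph{full} configuration space, correctly observe that heavily overlapping configurations (e.g.\ two tuples sharing $k-1$ sequences and shifts, whose indicators are strongly positively correlated) are the obstruction, and then stop, calling this bookkeeping "the technical heart of the argument." It is, and it is not carried out; as it stands the lower bound is unproven. The paper (following Alon--Kohayakawa--Mauduit--Moreira--R\"odl) sidesteps the covariance computation entirely by a structural choice you are missing: set $m=\lfloor N/3\rfloor$, take $M=m$, let the first sequence enter with shift $d_1=0$ so it contributes the raw vector $v(s_1)=(e_1(s_1),\dots,e_m(s_1))$, and force all other shifts into $[m,2m]$ so that the product vector $v_\ell(s_2,\dots,s_\ell,D)$ depends only on coordinates of index $>m$. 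For this restricted family the events
\begin{equation*}
A_\ell(s_1,\dots,s_\ell,D)=\bigl\{|\langle v(s_1),v_\ell(s_2,\dots,s_\ell,D)\rangle|\geq r\bigr\}
\end{equation*}
are \emph{exactly pairwise independent} (Lemma \ref{lemma:A-independent}), so the elementary bound $\P\bigl(\bigcup A_i\bigr)\geq 1-2/(Mp)$ of Lemma \ref{lemma:unio} applies with $\varepsilon=0$ and no variance estimate is needed; the restriction costs only a constant (whence $m=N/3$ and the threshold $\tfrac{4}{9}>\tfrac{2}{5}$), paid for by Lemma \ref{lemma:log-binom}. Without either this device or an actual proof of your covariance bound, your argument for the left-hand inequality is incomplete.
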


The cross-correlation measure $\Phi$ can be also defined for \textit{binary sequence generators} instead of families of sequences. Namely, let $\S$ be a given set (set of parameters or seeds) and  $N\in \N$ be an integer. A binary sequence generator is a map $G: \S \rightarrow \{-1,1\}^N$ where
\[
  s \mapsto E_N(s)= (e_1(s), e_2(s),\dots, e_N(s))\in \{-1,1\}^N.
\]
For a survey of (pseudorandom) sequence generators, in particular their application in cryptography, see \cite[Chapters 5 and 6]{HAC}.

The cross-correlation measure of the generator $G$ can be defined in the following way:

Let $M$, $k_1, k_2, \dots, k_\ell\geq 1$ be integers with the restriction $k=k_1+k_2+\dots +k_\ell\geq 2$. Let $D=(d_1^1, d_2^1, \dots, d_{k_1}^1, d_1^2, d_2^2, \dots, d_{k_2}^2, \dots, d_1^\ell, d_2^\ell, \dots, d_{k_\ell}^\ell)$ be a $k$-tuple such that 

\begin{equation}\label{cond:D2}
 0\leq d_1^i< d_2^i< \dots< d_{k_i}^i<M+d_{k_i}^i\leq N, \quad \
\text{for } i=1, \dots, \ell.
\end{equation}
Then for distinct $s_1, s_2,\dots, s_\ell \in \S$ write
\begin{align*}
     &V_{k_1, k_2, \dots, k_\ell}\left(E_N(s_1),E_N(s_2), \dots, E_N(s_\ell),M,D  \right)\\ 
     &= \sum_{n=1}^{M} e_{n+d_1^1}(s_1)e_{n+d_2^1}(s_1)\dots e_{n+d_{k_1}^1}(s_1) \dots e_{n+d_1^\ell}(s_\ell)e_{n+d_2^\ell}(s_\ell)\dots e_{n+d_{k_\ell}^\ell}(s_\ell).
\end{align*}

The \textit{cross-correlation measure of order $k$} of the generator $G$ is defined as
\[
 \widetilde{\Phi}_k(G)=\max \left| V_{k_1, k_2, \dots, k_\ell}\left(E_N(s_1),E_N(s_2), \dots, E_N(s_\ell),M,D  \right) \right|,
\]
where the maximum is taken over all integers $k_1, k_2, \dots, k_\ell\geq 1$ such that $k=k_1+k_2+\dots +k_\ell$, all $s_1, s_2,\dots, s_\ell \in \S$, and all $M$ and $D$ satisfying (\ref{cond:D2}).

If the generator $G$ is collision free (injection), then $\widetilde{\Phi}_k(G)=\Phi_k (\F)$ with the family
\[
 \F=\F(G)=\{E_N(s): s\in \S\}.
\]
On the other hand, if there is a collision: $E_N(s)=E_N(s')$ for $s\neq s'$, then $\widetilde{\Phi}_k(G)=N$.

First, we estimate the value of $\widetilde{\Phi}_k(G)$ for "random" generator $G$. For each $s\in\S$ and $1\leq n\leq N$  we choose $e_n(s)$ from $\{-1,1\}$ uniformly and independently. Then we have

\begin{thm}\label{thm2}
For a given $\varepsilon >0$, there exists $N_0$, such that if $N>N_0$ and $1\leq \log_2|\S|< N/12$ 
then we have with probability at least $1-\varepsilon$, that
\begin{align*}
 \frac{2}{5} \sqrt{N\left(\log \binom{N}{k}+k\log|\S|\right)} <\widetilde{\Phi}_k (G) &< \frac{5}{2}\sqrt{N\left(\log \binom{N}{k}+k\log|\S|\right)}
\end{align*}
for every integer $k$ with $2\leq k \leq N/(6\log_2|\S|)$.
\end{thm}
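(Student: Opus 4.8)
The plan is to derive Theorem~\ref{thm2} from Theorem~\ref{thm1}, using the dichotomy recorded just above the statement: if the random generator $G$ is collision-free then $\widetilde{\Phi}_k(G)=\Phi_k(\F(G))$, where $\F(G)=\{E_N(s):s\in\S\}$ is a subset of $\{-1,1\}^N$ of cardinality $|\S|$; and if $G$ has a collision then $\widetilde{\Phi}_k(G)=N$. Since the only discrepancy between the generator model and the family model of Theorem~\ref{thm1} is the possibility of collisions (and the with-replacement nature of independent sampling), the whole argument reduces to showing that collisions essentially never occur and then invoking Theorem~\ref{thm1}.

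First I would bound the collision probability. For fixed distinct seeds $s\neq s'$ the sequences $E_N(s)$ and $E_N(s')$ agree in each of their $N$ coordinates independently with probability $1/2$, so $\P\big(E_N(s)=E_N(s')\big)=2^{-N}$. A union bound over the fewer than $|\S|^2/2$ pairs shows that the probability of some collision is at most $|\S|^2 2^{-N}/2$. The hypothesis $\log_2|\S|<N/12$ gives $|\S|^2<2^{N/6}$, so this probability is at most $2^{-5N/6}/2$, which falls below $\varepsilon/2$ once $N$ is large. Write $A$ for the event that $G$ is collision-free, so that $\P(A)\geq 1-\varepsilon/2$.

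Next I would identify the conditional law of the image family. Conditioned on $A$, the tuple $\big(E_N(s)\big)_{s\in\S}$ is uniform over all tuples of pairwise distinct sequences, hence the unordered set $\F(G)$ is distributed exactly as a uniformly random subset of $\{-1,1\}^N$ of size $|\S|$ --- precisely the probabilistic model underlying Theorem~\ref{thm1}. Applying Theorem~\ref{thm1} with $|\F|=|\S|$ and with its error parameter taken to be $\varepsilon/2$, the inequalities
\[
\frac{2}{5}\sqrt{N\left(\log\binom{N}{k}+k\log|\S|\right)}<\Phi_k(\F(G))<\frac{5}{2}\sqrt{N\left(\log\binom{N}{k}+k\log|\S|\right)}
\]
hold simultaneously for all integers $k$ with $2\leq k\leq N/(6\log_2|\S|)$, conditionally on $A$, with probability at least $1-\varepsilon/2$. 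On $A$ these are bounds for $\widetilde{\Phi}_k(G)$, since there $\widetilde{\Phi}_k(G)=\Phi_k(\F(G))$ for every $k$.

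Finally I would assemble the pieces. Letting $B$ denote the event that the asserted two-sided bound holds for every admissible $k$, we have $\P(B^c)\leq \P(A^c)+\P(B^c\mid A)\leq \varepsilon/2+\varepsilon/2=\varepsilon$, which is the claim. I do not expect any substantive obstacle: all of the probabilistic difficulty is already contained in Theorem~\ref{thm1}, and what remains is the elementary collision estimate together with the observation that conditioning on injectivity converts independent (with-replacement) sampling into uniform size-$|\S|$ (without-replacement) sampling. The only point demanding mild care is matching the ranges of $k$ and splitting the error budget, but the identical hypotheses $\log_2|\cdot|<N/12$ and $2\leq k\leq N/(6\log_2|\cdot|)$ in the two theorems make this bookkeeping immediate.
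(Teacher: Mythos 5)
Your proposal does not prove Theorem~\ref{thm2}; it is circular relative to the logical structure of the paper. You reduce Theorem~\ref{thm2} to Theorem~\ref{thm1} via the collision estimate and the observation that, conditioned on injectivity, $\F(G)$ is a uniform size-$|\S|$ subset of $\{-1,1\}^N$. That reduction is sound as far as it goes --- it is in fact essentially the same argument, run in the opposite direction, that the paper uses to deduce Theorem~\ref{thm1} \emph{from} Theorem~\ref{thm2}. But Theorem~\ref{thm1} is never established independently: in the paper it is a corollary of Theorem~\ref{thm2}, so invoking it here assumes exactly what is to be proved. Your closing remark that ``all of the probabilistic difficulty is already contained in Theorem~\ref{thm1}'' is precisely the gap --- that difficulty has to be discharged somewhere, and the generator model (independent coordinates $e_n(s)$) is where the paper chooses to do it, because independence is what makes the computations tractable.

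The missing content is the direct argument of Section~3. For the upper bound one must show that each fixed sum $V_{k_1,\dots,k_\ell}$ is a sum of $M$ independent $\pm 1$ variables (this requires the product-of-coordinates independence observation), apply the exponential tail bound of Lemma~\ref{lemma:Spm}, and then carry out a union bound over all choices of $\ell$, $k_1,\dots,k_\ell$, $s_1,\dots,s_\ell$, $M$ and $D$, with separate counting arguments in the regimes $k\leq|\S|$ and $k>|\S|$ (the latter using convexity of $x\log x$ and Jensen). For the lower bound one needs the quantity $r_k(m,\S)$, the de Moivre--Laplace and pointwise binomial estimates (Lemmas~\ref{lemma:binom} and~\ref{lemma:binom=}), the pairwise independence of the events $A_\ell(s_1,\dots,s_\ell,D)$ (Lemma~\ref{lemma:A-independent}), and the second-moment inequality of Lemma~\ref{lemma:unio}. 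None of this can be recovered from the reduction you describe, so the proposal as written leaves the theorem unproved.
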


We can prove Theorem \ref{thm1} as a corollary of Theorem \ref{thm2}.

\begin{proof}[Theorem \ref{thm1}]
Throughout the proof we assume, that the integer $N$ is large enough.

First we show, that for $|\S|<2^{cN}$ with $0<c<1/2$, the probability of the collision is small:
\begin{equation}\label{eq:collision_prob}
 \P(\not \exists s,s'\in \S: \ E_N(s)=E_N(s')) = 1-o(1) 
\end{equation}

Indeed, this probability is
\begin{align*}
 &\frac{\binom{2^N}{|\S|}\cdot|\S|!}{\left(2^N \right)^{|\S|}}=\left(1-\frac{1}{2^N}\right)\cdot \left(1-\frac{2}{2^N}\right)\dots \left(1-\frac{|\S|-1}{2^N}\right) \geq \left(1-\frac{|\S|}{2^N} \right)^{|\S|}.
\end{align*}
Since for all $0<\delta<1$ there is $N_0$ such that if $N\geq N_0$ we have
\[
 \left(1-\frac{|\S|}{2^N} \right)^{|\S|}\geq \left(1-\frac{1}{2^N/|\S|} \right)^{\delta 2^N/|\S|}\geq \left(e^{-\delta}+o(1)\right),
\]
which gives  (\ref{eq:collision_prob}).

Now let us assume, that Theorem \ref{thm2} holds with $\varepsilon_1$ and let $\varepsilon'$ be the probability of the collision. Then for a random generator $G$ we have
\begin{align*}
 &\varepsilon_1 > \P\left(\widetilde{\Phi}_k(G)>   \frac{5}{2} \sqrt{N\left(\log \binom{N}{k}+k\log|\S|\right)}\right)\\ 
 &  = \P\left(\widetilde{\Phi}_k(G)>   \left. \frac{5}{2} \sqrt{N\left(\log \binom{N}{k}+k\log|\S|\right)} \ \right| \text{there is no collision}\right) \\
 &\qquad \cdot \P(\text{there is no collision})\\
 &\quad + \P\left(\widetilde{\Phi}_k(G)>   \left.\frac{5}{2} \sqrt{N\left(\log \binom{N}{k}+k\log|\S|\right)} \ \right| \text{there is a collision}\right) \\
 &\qquad \cdot \P(\text{there is a collision})\\
 &= \P\left(\widetilde{\Phi}_k(G)>   \left. \frac{5}{2} \sqrt{N\left(\log \binom{N}{k}+k\log|\S|\right)} \ \right| \text{there is no collision}\right) (1-\varepsilon')\\
 & \quad + 1\cdot \varepsilon'.
\end{align*}
If $G$ is chosen uniformly from all generators with the condition that there is no collision, then the family $\F=\F(G)$ is uniformly distributed within  all families of size $|\F|=|\S|$. Thus
\begin{align*}
&\P\left(\widetilde{\Phi}_k(G)>   \left. \frac{5}{2} \sqrt{N\left(\log \binom{N}{k}+k\log|\S|\right)} \ \right| \text{there is no collision}\right) \\
&=  \P\left(\Phi_k(\F(G))>   \frac{5}{2} \sqrt{N\left(\log \binom{N}{k}+k\log|\F(G)|\right)}\right)
\end{align*}
and so
\begin{equation*}
 \P\left(\Phi_k(\F)>   \frac{5}{2} \sqrt{N\left(\log \binom{N}{k}+k\log|\F|\right)}\right)<\frac{\varepsilon_1-\varepsilon'}{1-\varepsilon'}.
\end{equation*}

In the same way we get
\begin{equation*}
 \P\left(\Phi_k(\F)<   \frac{2}{5} \sqrt{N\left(\log \binom{N}{k}+k\log|\F|\right)}\right)>1-\frac{\varepsilon_1-\varepsilon'}{1-\varepsilon'}.
\end{equation*}

Choosing $\varepsilon=\frac{\varepsilon_1-\varepsilon'}{1-\varepsilon'}$  we get the result.
\end{proof}

\section{Estimates for $\widetilde{\Phi}_k (G)$ for random generator $G$}

In this section we consider $G$ as a "random" generator i.e. $e_n(s)$ are independent and uniform random variables in $\{-1,1\}$, for each $s\in\S$ and $1\leq n\leq N$.

\subsection{Estimates for the binomial distribution}
The proof of Theorem \ref{thm2} is based on estimations on tails of the binomial distribution.
First we summarize some basic facts about their properties. 

Let $S(n,p)$ be the sum of $n$ independent Bernoulli random variables with mean $p$. First we state the following consequences of the de Moivre-Laplace theorem (see e.g. \cite[Chapter 1, Theorem 6]{Bollobas}) for $p=1/2$.

\begin{lemma}\label{lemma:binom}
\begin{enumerate}[(i)] 
 \item  \label{lemma:binom-1} For any $c=c(n)>0$ with $c=o(n^{1/6})$, we have
 \begin{align}\label{eq:ML1a}
  \P\left(S(n,1/2)\geq \left\lfloor \frac{n}{2}\right\rfloor +c\sqrt{n} \right)
  &=\sum _{\ell\geq c\sqrt{n}}\frac{1}{2^n}\binom{n}{\lfloor n/2 \rfloor+\ell}\notag \\ 
  &=\left(\sqrt{\frac{2}{\pi}+o(1)} \right)\left(\int_c^\infty e^{-2x^2}dx\right).
 \end{align}
In particular, if we further have that $c \rightarrow \infty$, then
 \begin{align}\label{eq:ML1b}
  \P\left(S(n,1/2)\geq \left\lfloor \frac{n}{2}\right\rfloor +c\sqrt{n} \right)
  &=\frac{e^{-2c^2}}{2c\sqrt{2 \pi}}(1+o(1)).
 \end{align}

 \item \label{lemma:binom-2} The estimates (\ref{eq:ML1a}) and (\ref{eq:ML1b}) hold for the lower tail
 \[
  \P\left(S(n,1/2)\leq \left\lfloor \frac{n}{2}\right\rfloor -c\sqrt{n} \right)
 \]
as well.
\end{enumerate}
 \end{lemma}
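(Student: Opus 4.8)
The plan is to reduce both parts to the local form of the de Moivre--Laplace theorem and then to pass from a sum to an integral. Since $S(n,1/2)$ has mean $n/2$ and variance $n/4$, the local limit theorem cited from \cite{Bollobas} yields, uniformly for $k=\lfloor n/2\rfloor+\ell$ with $\ell=o(n^{2/3})$,
\[
 \frac{1}{2^n}\binom{n}{\lfloor n/2\rfloor+\ell}=\frac{2}{\sqrt{2\pi n}}\,e^{-2\ell^2/n}\bigl(1+o(1)\bigr),
\]
the error $o(1)$ being uniform over that range. This is precisely where the hypothesis $c=o(n^{1/6})$ is used: the dominant terms of the tail sum sit at $\ell$ of order $c\sqrt n$, which lies in the range $\ell=o(n^{2/3})$ exactly when $c=o(n^{1/6})$, so the Gaussian approximation applies there with a uniform relative error.

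The second step is to read the resulting sum as a Riemann sum. Summing the Gaussian values above and setting $x_\ell=\ell/\sqrt n$, consecutive $x_\ell$ are spaced by $1/\sqrt n$, so
\[
 \sum_{\ell\geq c\sqrt n}\frac{2}{\sqrt{2\pi n}}\,e^{-2\ell^2/n}
 =\frac{2}{\sqrt{2\pi}}\sum_{\ell\geq c\sqrt n}e^{-2x_\ell^2}\,\frac{1}{\sqrt n}
 =\sqrt{\frac{2}{\pi}}\left(\int_c^\infty e^{-2x^2}\,dx+o(1)\right),
\]
where the monotone decay of $e^{-2x^2}$ both controls the Riemann-sum error and lets one discard, by a crude tail bound, the contribution of the large $\ell$ where the local limit theorem is no longer valid. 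Absorbing both error sources into the leading constant gives (\ref{eq:ML1a}). For (\ref{eq:ML1b}), under the additional assumption $c\to\infty$ I would insert the standard asymptotic $\int_c^\infty e^{-2x^2}\,dx=\frac{e^{-2c^2}}{4c}(1+o(1))$, which follows from a single integration by parts, and then check the constants via $\sqrt{2/\pi}\cdot\frac{1}{4c}=\frac{1}{2c\sqrt{2\pi}}$.

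Part (ii) then follows from symmetry: since $S(n,1/2)$ and $n-S(n,1/2)$ are identically distributed, the lower tail $\P(S(n,1/2)\leq\lfloor n/2\rfloor-c\sqrt n)$ coincides with an upper tail of the same shape, the mismatch between $\lfloor n/2\rfloor$ and $\lceil n/2\rceil$ being an $O(1)$ shift that is invisible in the asymptotics. I expect the main obstacle to be the first step: making the relative error in the local limit theorem genuinely uniform across the summation range, and verifying that the portion of the sum beyond the region of validity is negligible, so that this error and the Riemann-sum error can be combined into the single $o(1)$ displayed in (\ref{eq:ML1a}).
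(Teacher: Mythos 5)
The paper offers no proof of this lemma at all: it is quoted as a consequence of the de Moivre--Laplace theorem with a reference to Bollob\'as, so there is no in-paper argument to compare yours against. What you propose is the standard derivation and it is sound in outline: the local limit theorem with uniform relative error for $\ell=o(n^{2/3})$ (which is exactly what $c=o(n^{1/6})$ buys), a Riemann-sum comparison with mesh $1/\sqrt n$, a Chernoff-type bound for the inadmissible range $\ell\gg n^{2/3}$ (that tail is $e^{-\Omega(n^{1/3})}$, negligible against the main term of size roughly $e^{-2c^2}=e^{-o(n^{1/3})}$), integration by parts for (\ref{eq:ML1b}), and the symmetry $S(n,1/2)\sim n-S(n,1/2)$ for part (ii). The one place where your write-up, read literally, is weaker than what is claimed is the displayed Riemann-sum step: you record the error additively, as $\sqrt{2/\pi}\bigl(\int_c^\infty e^{-2x^2}\,dx+o(1)\bigr)$, whereas (\ref{eq:ML1a}) asserts a \emph{multiplicative} error, and only the multiplicative form survives the passage to (\ref{eq:ML1b}) when $c\to\infty$ and the integral itself tends to $0$. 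The upgrade is available and you should make it explicit: since the integrand is decreasing, the Riemann sum and the integral differ by at most one cell, $n^{-1/2}e^{-2(c-n^{-1/2})^2}=O\bigl(n^{-1/2}e^{-2c^2}\bigr)$, while $\int_c^\infty e^{-2x^2}\,dx\gg e^{-2c^2}/\max(c,1)$, so the ratio is $O\bigl(c/\sqrt n\bigr)=o(1)$ under $c=o(n^{1/6})$; combined with the uniform relative error of the local limit theorem this yields the stated multiplicative $o(1)$. With that adjustment your argument is complete.
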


Let $\{x\}=x-\lfloor x\rfloor$. We have the following lower estimate for the symmetric binomial distribution (see \cite[Fact 10]{AKMMR}).

\begin{lemma}\label{lemma:binom=}
 Let $n$ and $c$ be integers with
 \begin{equation*}
  -\left\lfloor \frac{n}{2} \right\rfloor \leq c \leq \left\lceil  \frac{n}{2} \right\rceil.
 \end{equation*}
If $n$ is sufficiently large, then
 \begin{align*}
  \P\left(S(n,1/2)= \left\lfloor \frac{n}{2}\right\rfloor +c \right)
  =\frac{1}{2^n}\binom{n}{\lfloor n/2 \rfloor+c}
  \geq (1+o(1))2^{-4(c+\{n/2\})^2/n}\sqrt{\frac{2}{\pi n}}.
 \end{align*}
\end{lemma}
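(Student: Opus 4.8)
The plan is to obtain the estimate from Stirling's formula in its precise form and then to isolate an elementary inequality for the binary entropy. Write $m=\lfloor n/2\rfloor+c$ and let $H(t)=-t\log_2 t-(1-t)\log_2(1-t)$ be the binary entropy. First I would insert Stirling's formula with explicit error, $k!=\sqrt{2\pi k}\,(k/e)^k e^{\theta_k}$ with $0<\theta_k<1/(12k)$, into $\binom{n}{m}=n!/(m!\,(n-m)!)$. Whenever $m$ and $n-m$ both tend to infinity this gives
\[
  \frac{1}{2^n}\binom{n}{m}=(1+o(1))\sqrt{\frac{n}{2\pi m(n-m)}}\;2^{-n(1-H(m/n))},
\]
and since $m(n-m)\le n^2/4$ the prefactor is at least $\sqrt{2/(\pi n)}$, which is exactly the factor appearing in the claim. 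The whole problem is thus reduced to showing that the exponential factor satisfies $2^{-n(1-H(m/n))}\ge(1+o(1))\,2^{-4(c+\{n/2\})^2/n}$.

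The key step is the pointwise entropy bound
\[
  1-H\!\left(\tfrac12+u\right)\le 4u^2\qquad\text{for } |u|\le\tfrac12,
\]
i.e.\ $n(1-H(m/n))\le 4(m-n/2)^2/n$. I would prove it by examining $\phi(u)=4u^2-\bigl(1-H(\tfrac12+u)\bigr)$: one has $\phi(0)=\phi(\pm\tfrac12)=0$ and $\phi'(0)=0$, while $\phi''(u)=8-\bigl(\ln2\,(\tfrac14-u^2)\bigr)^{-1}$ is positive near $0$ and negative near $\pm\tfrac12$, so $\phi$ has a minimum at $0$ and is concave as it returns to $0$ at the endpoints, whence $\phi\ge0$ on $[-\tfrac12,\tfrac12]$. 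Because the true deviation from the centre is $m-n/2=c-\{n/2\}$, this yields $2^{-n(1-H(m/n))}\ge 2^{-4(c-\{n/2\})^2/n}$, and as $(c-\{n/2\})^2\le(c+\{n/2\})^2$ precisely when $c\ge0$ (and for every $c$ when $n$ is even, since then $\{n/2\}=0$), the lemma follows at once in these cases after combining with the prefactor.

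The main obstacle is the asymmetry caused by the shift $\{n/2\}$: for odd $n$ and $c<0$ one has $(c-\{n/2\})^2>(c+\{n/2\})^2$, so the bare inequality is too weak and its slack $\phi$ must be exploited. Writing $1-H(m/n)=4u^2-\phi(u)$ with $u=(m-n/2)/n$, a short computation reduces the required inequality to $n\,\phi(u)\ge 8|c|/n-o(1)$, where $8|c|/n$ measures the $O(1)$ gap $\tfrac4n\bigl((c-\{n/2\})^2-(c+\{n/2\})^2\bigr)$. I would verify this in three overlapping regimes. For bounded $|c|$ both members equal $(1+o(1))\sqrt{2/(\pi n)}$ and the factor $(1+o(1))$ already covers the $O(1/n)$ difference. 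For unbounded $|c|$ with $m$ and $n-m$ both tending to infinity one uses $\phi(u)\ge c_1 u^2$ away from the endpoints and $\phi(u)=(1+o(1))\tfrac{m}{n}\log_2\tfrac{n}{m}$ near the tail, so that $n\,\phi(u)$ grows like $c_1 c^2/n$ in the bulk and like $m\log_2(n/m)$ in the tail, each comfortably exceeding $8|c|/n$ once $|c|$ is large. Finally, when $\min(m,n-m)$ stays bounded I would estimate $\binom{n}{m}$ directly, keeping the genuinely larger prefactor $\sqrt{n/(2\pi m(n-m))}$; there the inequality reduces to $m\log_2(n/m)\gtrsim\text{const}$, valid for large $n$, and the now merely bounded Stirling error factor is harmless because of the ample room. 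Patching these regimes into a single bound uniform in $c$ is the delicate part of the argument.
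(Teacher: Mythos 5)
Your argument is correct in all essentials, but it is worth noting that the paper does not prove this lemma at all: it is quoted verbatim from Alon--Kohayakawa--Mauduit--Moreira--R\"odl \cite{AKMMR} (their Fact~10), so what you have produced is a self-contained derivation of a result the paper only cites. Your route --- Stirling with explicit error to get $\frac{1}{2^n}\binom{n}{m}=(1+o(1))\sqrt{n/(2\pi m(n-m))}\,2^{-n(1-H(m/n))}$, the entropy inequality $1-H(\tfrac12+u)\le 4u^2$ on $|u|\le\tfrac12$ via the sign pattern of $\phi''(u)=8-\bigl(\ln 2\,(\tfrac14-u^2)\bigr)^{-1}$ (convex then concave, with $\phi(0)=\phi'(0)=\phi(\pm\tfrac12)=0$), and then the separate treatment of the asymmetry $4c\{n/2\}$ for odd $n$ and $c<0$ --- is sound, and your reduction of that last difficulty to $n\,\phi(u)\ge 8|c|/n-o(1)$ is the right bookkeeping: the deficit $16c\{n/2\}/n$ is $O(|c|/n)$, hence $o(1)$ for bounded $|c|$, dominated by $c_1c^2/n$ in the bulk, and dominated by $n\phi(u)\sim m\log_2(n/m)\to\infty$ in the tail, while the extreme case of bounded $\min(m,n-m)$ has so much slack in the prefactor that a direct estimate of $\binom{n}{m}$ closes it. The one place your write-up stays schematic is exactly the one you flag --- making the three regimes overlap uniformly in $c$ --- but since the quantity to be beaten, $8|c|/n$, is bounded by $4$ over the whole range, the regimes genuinely do cover everything and no case falls through. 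In short: a correct, more elementary and fully self-contained proof of a statement the paper imports as a black box.
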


Let
\[
 S^{\pm}(n)=\sum_{1\leq i\leq n} X_i,
\]
where $X_i$ ($1\leq i \leq n$) are independent random variables with mean 0, that is,
\[
 \P(X_i=-1)=\P(X_i=+1)=1/2.
\]
Clearly, $(S^{\pm}(n)+n)/2$ is binomially distributed with parameters $n$ and $1/2$. The following lemma states a well-known estimate for large deviation of $S^{\pm}(n)$ (see e.g. \cite[Appendix 2]{AlonSpencer}).

\begin{lemma}\label{lemma:Spm}
 Let $X_i$ ($1\leq i \leq n$) be independent $\pm 1$ random variables with mean 0. Let  $S^{\pm}(n)=\sum_{1\leq i\leq n} X_i$. For any real number $a>0$, we have
 \[
  \P(S^{\pm}(n) >a)<e^{-a^2/2n}.
 \]

\end{lemma}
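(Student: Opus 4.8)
The plan is to use the standard exponential moment (Chernoff) method. First I would fix a parameter $t>0$ and apply Markov's inequality to the nonnegative random variable $e^{tS^{\pm}(n)}$, obtaining
\[
 \P(S^{\pm}(n)>a)=\P\left(e^{tS^{\pm}(n)}>e^{ta}\right)\leq e^{-ta}\,\mathbb{E}\left[e^{tS^{\pm}(n)}\right].
\]
Since the $X_i$ are independent, the moment generating function factorizes over the $n$ coordinates, and each factor is $\mathbb{E}[e^{tX_i}]=(e^t+e^{-t})/2=\cosh t$. Hence $\mathbb{E}[e^{tS^{\pm}(n)}]=(\cosh t)^n$.

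Next I would bound $\cosh t$ by a Gaussian-type factor. Comparing the Taylor expansions $\cosh t=\sum_{j\geq 0}t^{2j}/(2j)!$ and $e^{t^2/2}=\sum_{j\geq 0}t^{2j}/(2^j j!)$ term by term, and using $(2j)!\geq 2^j j!$, gives $\cosh t\leq e^{t^2/2}$, with strict inequality for $t\neq 0$ because the $j=2$ terms already differ. Consequently $\mathbb{E}[e^{tS^{\pm}(n)}]\leq e^{nt^2/2}$, and combining this with the previous display yields $\P(S^{\pm}(n)>a)\leq e^{-ta+nt^2/2}$.

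Finally I would optimize the exponent over $t>0$. The quadratic $-ta+nt^2/2$ is minimized at $t=a/n>0$, where it equals $-a^2/(2n)$; substituting this choice gives the claimed bound, and the strict inequality in the statement comes for free from the strict inequality $\cosh t<e^{t^2/2}$ at $t=a/n$. I do not expect any real obstacle here, as this is a classical large-deviation estimate; the only point requiring a moment's care is justifying the strictness of the final inequality, which follows directly from the term-by-term comparison of the two power series above.
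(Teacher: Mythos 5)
Your proof is correct and is exactly the classical Chernoff-type argument (Markov applied to $e^{tS^{\pm}(n)}$, the bound $\cosh t\leq e^{t^2/2}$ via term-by-term comparison of power series, and optimization at $t=a/n$); the paper gives no proof of this lemma, citing instead Appendix 2 of Alon--Spencer, where this same argument appears. Your additional remark justifying strictness of the inequality is a valid bonus, so there is nothing to fix.
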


\subsection{Proof of Theorem \ref{thm2}}

We prove Theorem \ref{thm2} in two parts. First, we prove the upper estimate for $\widetilde{\Phi}_k(G)$ for typical generator $G$.

\begin{lemma}\label{lemma:thm2-upper}
For $1\leq  \log_2|\S|< \log_2 N$  we have 
\[
 \widetilde{\Phi}_k (G)<2\sqrt{N \left(\log \binom{N}{k}+k\log |\S|\right)},
\]
and for  $\log_2 N\leq \log_2 |\S|<N/12$ we have 
\begin{align*}
 \widetilde{\Phi}_k (G)<2\sqrt{N \left(k\log N+\log \binom{|\S|}{k}\right)}
	  <2\sqrt{N \left(\log \binom{N}{k}+(1+o(1))k\log |\S|\right)}
\end{align*}
with probability tending to 1 as $N\rightarrow \infty$ for every integer $k$ with $2\leq k \leq  N/(6\log_2|\S|)$.
\end{lemma}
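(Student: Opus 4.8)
\emph{Plan.} The quantity $\widetilde{\Phi}_k(G)$ is a maximum of $|V|$ over a large but finite collection of parameter choices: a composition $k=k_1+\dots+k_\ell$ with $k_i\ge 1$, an ordered tuple of distinct seeds $s_1,\dots,s_\ell\in\S$, a value $M\le N$, and shifts $D$ obeying (\ref{cond:D2}). I would bound $\widetilde{\Phi}_k(G)$ by a union bound: first estimate $\P(|V|>a)$ for a single fixed choice, then multiply by the number of choices and optimize $a$. The argument rests on two ingredients — an exact distributional statement for one $V$, and a counting estimate — together with the Chernoff bound of Lemma~\ref{lemma:Spm}.

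\emph{Step 1 (distribution of a single $V$).} Fix one admissible choice and set, for $1\le n\le M$,
\[
 W_n=\prod_{i=1}^{\ell}\prod_{t=1}^{k_i} e_{n+d_t^i}(s_i),
\]
so that $V=\sum_{n=1}^M W_n$ and each $W_n$ is a product of exactly $k$ \emph{distinct} variables $e_m(s)$ (distinctness uses that the seeds are distinct and the shifts within each group are strictly increasing). Encode $W_n$ by the vector $v_n\in\mathbb{F}_2^{\S\times\{1,\dots,N\}}$ whose support lists the variables occurring in $W_n$. The central claim is that $v_1,\dots,v_M$ are \emph{linearly independent over} $\mathbb{F}_2$: processing the $W_n$ in increasing order of $n$, the variable $e_{n+d_{k_1}^1}(s_1)$ attached to the largest shift of the first group is a pivot for $W_n$, since it occurs in $W_n$, while any occurrence in some $W_{n'}$ forces the seed to coincide with $s_1$ (hence $i=1$) and then $n'=n+d_{k_1}^1-d_t^1\ge n$, so $n'<n$ is impossible. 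Linear independence of the $v_n$ means the Rademacher characters $W_1,\dots,W_M$ are jointly independent and uniform on $\{-1,1\}$, whence $V$ has exactly the distribution of $S^{\pm}(M)$. Lemma~\ref{lemma:Spm} then gives $\P(|V|>a)<2e^{-a^2/2M}\le 2e^{-a^2/2N}$ for every such choice.

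\emph{Step 2 (counting).} I would bound the number of admissible choices by
\[
 N\sum_{\ell\ge 1}\ \sum_{k_1+\dots+k_\ell=k}\ |\S|^{\ell}\prod_{i=1}^{\ell}\binom{N}{k_i},
\]
where $N$ accounts for $M$, the factor $|\S|^\ell$ for the ordered seed tuple (note $\ell\le|\S|$ automatically), and $\binom{N}{k_i}$ for the increasing shifts of group $i$; the number of compositions is at most $2^{k}$. Taking logarithms and comparing with $\binom{N}{k}\ge (N-k)^k/k!$, one gets $\sum_i\log\binom{N}{k_i}\le\log\binom{N}{k}+\log\binom{k}{k_1,\dots,k_\ell}+k^2/(N-k)$, and the multinomial and composition factors are $O(k\log|\S|)$ because $\ell\le|\S|$. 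In the range $k\le N/(6\log_2|\S|)$ one checks $k^2/(N-k)=o\!\left(\log\binom{N}{k}\right)$, so in the regime $|\S|<N$ altogether $\log(\text{count})\le(1+o(1))\big(\log\binom{N}{k}+k\log|\S|\big)$. In the regime $|\S|\ge N$ I would instead bound $\prod_i\binom{N}{k_i}\le N^k$ and the seed count by $\binom{|\S|}{\ell}\ell!\le\binom{|\S|}{k}k!$, giving $\log(\text{count})\le (2+o(1))k\log N+\log\binom{|\S|}{k}$.

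\emph{Step 3 (combine; main obstacle).} Setting $a=2\sqrt{N\,L}$ with $L$ the relevant bracket makes $2e^{-a^2/2N}=2e^{-2L}$, and since the count is at most $e^{(2-\delta)L}$ for some $\delta>0$ with $L\to\infty$ (as $k\ge 2$), the union bound $\text{count}\cdot 2e^{-2L}\to0$; a further union over the at most $N$ values of $k$ costs only an additive $\log N$ and is absorbed. The translation $k\log N+\log\binom{|\S|}{k}\le\log\binom{N}{k}+(1+o(1))k\log|\S|$ asserted in the lemma is then pure Stirling algebra, using $\log|\S|\ge\log N\to\infty$. I expect the genuinely delicate point to be Step~2: one must keep the constants strictly below the budget provided by the factor $2$ in $2\sqrt{\,\cdot\,}$ \emph{uniformly} across the whole range of $k$, and in particular control the term $k^2/(N-k)$ arising from $\prod_i\binom{N}{k_i}$ when $k$ is comparable to $N$ (which can only occur when $|\S|$, and hence $\ell$, is small). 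The identity $V\stackrel{d}{=}S^{\pm}(M)$ of Step~1 is conceptually the heart, but once the pivot variable is spotted it is short; the quantitative bookkeeping of Step~2 is where the real work lies.
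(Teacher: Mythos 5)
Your overall architecture is the same as the paper's: show that a single $V$ is distributed exactly as $S^{\pm}(M)$, apply Lemma~\ref{lemma:Spm}, and take a union bound over all admissible configurations. Your Step~1 is correct and is in fact a cleaner justification of the distributional identity than the paper's (the pivot variable $e_{n+d_{k_1}^1}(s_1)$ argument for $\mathbb{F}_2$-linear independence of the supports is exactly right, using that the seeds are distinct and the shifts within a group are strictly increasing). Step~3's ``translation'' inequality is also fine: the $k\log k$ terms from $\binom{N}{k}\ge (N/k)^k$ and $\binom{|\S|}{k}\le (e|\S|/k)^k$ cancel, leaving an error of $k=o(k\log|\S|)$.

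The gap is in Step~2, and it is not merely a matter of deferred bookkeeping: the asserted estimate $\log(\textrm{count})\le(1+o(1))\bigl(\log\binom{N}{k}+k\log|\S|\bigr)$ is \emph{false} at the top of the admissible range. Take $|\S|=2$ and $k=\lfloor N/6\rfloor$. The $\ell=2$ part of your count alone is at least $\tfrac12 N\sum_{k_1+k_2=k}\binom{N}{k_1}\binom{N}{k_2}\ge\tfrac14 N\binom{2N}{k}$ by Vandermonde, and $\log\binom{2N}{N/6}=(2H(1/12)+o(1))N\approx 0.574\,N$, whereas $L=\log\binom{N}{N/6}+\tfrac{N}{6}\log 2=(H(1/6)+\tfrac{\log 2}{6}+o(1))N\approx 0.566\,N$; so $\log(\textrm{count})>L$. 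Two of the supporting claims also fail to deliver what you need: $k^2/(N-k)$ is \emph{not} $o\bigl(\log\binom{N}{k}\bigr)$ when $k=\Theta(N)$ (it is a constant fraction of it, about $1/(5\log 6)$ at $k=N/6$), and an ``$O(k\log|\S|)$'' bound on the multinomial and composition factors can never produce a $(1+o(1))$ multiplicative factor in front of $k\log|\S|$. What is actually true, and what the union bound needs, is $\textrm{count}\le e^{(2-\delta)L}$ uniformly in $k$ --- the budget is $2L$, not $L$ --- and this is precisely the point you flag as ``where the real work lies'' without carrying it out. The paper closes it by splitting the count into $k\le|\S|$ (where the $D$'s are counted simply by $N^k$) and $k>|\S|$ (where $\prod_i\binom{N}{k_i}\le\binom{N}{k}(e|\S|)^k$ via Jensen applied to $\sum_i k_i\log k_i$), and then exploiting $\binom{N}{k}\ge(N/k)^k\ge(6\log_2|\S|)^k\ge 6^k$ to reduce everything to a summable bound of the shape $\frac{Ck^2}{N}(e/3)^{k-2}$. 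You would need to redo Step~2 along these lines (or some equally careful variant) for the proof to stand.
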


\begin{proof}
Assume first, that  $1\leq \log_2 |\S|<\log_2 N$.

Let us consider the event
\begin{equation}\label{eq:thm2-upper-1}
V_{k_1,k_2,\dots, k_\ell}\left(E_N(s_1),E_N(s_2), \dots E_N(s_\ell), M, D \right)> 2\sqrt{N\left(\log \binom{N}{k}+k\log |\S|\right)} 
\end{equation}
for fixed integers $k$, $\ell$, $k_1,k_2,\dots, k_\ell$, $M$ and $k$-tuple $D$ with restrictions $k=k_1+k_2+\dots+ k_\ell$ and (\ref{cond:D2}).

Since $e_n(s)$ are independent for $1\leq n \leq N$ and $s\in\S$, then 
\begin{equation*}
z_n=e_{n+d_1^1}(s_1)e_{n+d_2^1}(s_1)\dots e_{n+d_{k_1}^1}(s_1) \dots e_{n+d_1^\ell}(s_\ell)e_{n+d_2^\ell}(s_\ell)\dots e_{n+d_{k_\ell}^\ell}(s_\ell)
\end{equation*}
are also independent and uniform in $\{-1,1\}$. 
This follows from the observations, that
for each $j$ the sequence
\[
 \left(e_{1+d_1^j}(s_j)\cdots e_{1+d_{k_j}^j}(s_j), \dots,  e_{N-d_{k_j}^j+d_1^j}(s_j)\cdots e_{N}(s_j)\right)
\]
is uniformly distributed in $\{-1,1\}^{N-d_{k_j}^j}$,
and the sequence
\[
 (x_1(1),\dots, x_m(1),\dots ,x_1(p),\dots,x_m(p))
\]
is uniform in $\{-1,1\}^{pm}$ if and only if
\[
 (x_1(1),\dots, x_m(1), x_1(p-1),\dots,x_m(p-1), x_1(1)\cdots x_1(p),\dots, x_1(p)\cdots x_m(p))
\]
is uniform in $\{-1,1\}^{pm}$.
%
%
%

Then 
\[
V_{k_1,k_2,\dots, k_\ell}\left(E_N(s_1),E_N(s_2), \dots E_N(s_\ell), M, D \right)
\]
has the same distribution as $S^{\pm}(M, 1/2)$. By Lemma \ref{lemma:Spm} we have, that (\ref{eq:thm2-upper-1}) holds with probability less than
\begin{equation*}
 \exp \left\{-\frac{1}{2M} 4N\left(\log  \binom{N}{k} + k\log |\S| \right)\right\}\leq \left( \binom{N}{k} |\S|^{k}\right)^{-2}.
\end{equation*}

Summing over all possible choices of $\ell$, $k_1,k_2,\dots, k_\ell$, $s_1,s_2,\dots, s_\ell$, $M$ and $D$ we get

\begin{align}\label{eq:thm2-upper-3-S-kicsi}
 &\P\left(\widetilde{\Phi}_k (G) >2\sqrt{N \left(\log \binom{N}{k}+k\log |\S|\right)}\right) \notag \\
 &\qquad \leq \sum_{\ell} \sum_{k_1,k_2,\dots, k_\ell} \sum_{s_1,s_2,\dots, s_\ell} \sum_{M} \sum_{D} \left( \binom{N}{k} |\S|^k\right)^{-2}.
\end{align}

For $k\leq |\S|$ we estimate the number of $k$-tuples $D$ by $N^k$. Thus (\ref{eq:thm2-upper-3-S-kicsi}) is less than
\begin{align}\label{eq:thm2-upper-4-S-kicsi}
 &\quad \left( \binom{N}{k}|\S|^{k}\right)^{-2} \sum_{\ell=1}^{k}\binom{k-1}{\ell-1} \binom{|\S|}{\ell}  N^{k+1} \notag\\
 &\leq 2^{k-1} \frac{\binom{|\S|}{k} N^{k+1}}{\left( \binom{N}{k} |\S|^k\right)^{2}} \leq 2^{k-1} \frac{\frac{e^k|\S|^k}{k^k} N^{k+1}}{\frac{N^{2k}}{k^{2k}} |\S|^{2k}}=\frac{2^{k-1}(ek)^k}{|\S|^k N^{k-1}}\leq e \left( \frac{2e}{N}\right)^{k-1}\notag\\
 &\leq \frac{2e^2}{N}\left( \frac{e}{3}\right)^{k-2},
\end{align}
where we used $\left(\frac{a}{b}\right)^b\leq \binom{a}{b}\leq \left(\frac{ea}{b}\right)^b$.

Next, consider (\ref{eq:thm2-upper-3-S-kicsi}) for $k> |\S|$. We estimate the number of $k$-tuples $D$ of form (\ref{cond:D2}) with the restriction $\ell\leq |\S|$ by 
\begin{align}\label{eq:thm2-upper-6-S-kicsi}
 \binom{N}{k_1}\binom{N}{k_2}\dots \binom{N}{k_\ell}\leq \frac{(eN)^{k_1}}{k_1^{k_1}}\frac{(eN)^{k_2}}{k_2^{k_2}}\dots \frac{(eN)^{k_\ell}}{k_\ell^{k_\ell}}
 =\frac{(eN)^{k}}{e^{k_1\log k_1+k_2 \log k_2+\dots +k_\ell\log k_\ell}}
\end{align}
Since the function $x \log x$ (with $0 \log 0 =0$) is convex, writing $k_{\ell+1}=\dots=k_{|\S|}=0$, we get by the Jensen inequality, that
\begin{equation*}
 k_1\log k_1+k_2 \log k_2+\dots +k_\ell\log k_\ell=\sum_{i=1}^{|\S|}k_i \log k_i \geq k \log \frac{k}{|\S|}.
\end{equation*}
Whence we get that (\ref{eq:thm2-upper-6-S-kicsi}) is less than
\begin{equation}\label{eq:thm2-upper-7-S-kicsi}
 \frac{(eN)^{k}}{\left(\frac{k}{|\S|}\right)^k}\leq \binom{N}{k} (e|\S|)^k.
\end{equation}
By (\ref{eq:thm2-upper-6-S-kicsi}) and (\ref{eq:thm2-upper-7-S-kicsi}) we have that (\ref{eq:thm2-upper-3-S-kicsi}) for $k> |\S|$ is less than
\begin{align}\label{eq:thm2-upper-8-S-kicsi}
 &\left( \binom{N}{k} |\S|^k\right)^{-2} \binom{N}{k} (e|\S|)^k\sum_{\ell} \sum_{k_1,k_2,\dots, k_\ell} \sum_{s_1,s_2,\dots, s_\ell} \sum_{M} 1 \notag \\
 &\leq  \left( \binom{N}{k} |\S|^k\right)^{-1} e^k N \sum_{\ell =1}^{|\S|} \binom{k-1}{\ell -1} |\S|^\ell \notag \\
 &\leq  \left( \binom{N}{k} |\S|^k\right)^{-1} e^k N|\S| \sum_{\ell =0}^{k-1} \binom{k-1}{\ell } |\S|^\ell \notag \\
 & =\left( \binom{N}{k} |\S|^k\right)^{-1} e^k N|\S| (|\S|+1)^{k-1}\notag \\
 & \leq e k \left(\frac{2ek}{N} \right)^{k-1}\leq \frac{2(ek)^2}{N} \left(\frac{e}{3} \right)^{k-2}.
\end{align}
Finally, by (\ref{eq:thm2-upper-3-S-kicsi}), (\ref{eq:thm2-upper-4-S-kicsi}) and (\ref{eq:thm2-upper-8-S-kicsi}) we get, that for a fixed $k$, the probability of
\begin{align}\label{eq:prob-S-kicsi}
 \widetilde{\Phi}_k (G) >2\sqrt{N \left(\log \binom{N}{k}+k\log |\S|\right)}
\end{align}
is
\begin{equation*}
 O\left(\frac{1}{N}  k^2 \left(\frac{e}{3} \right)^{k-2} \right).
\end{equation*}
Summing it for $2\leq k \leq  N/(6\log_2|\S|)$ we get that the probability that (\ref{eq:prob-S-kicsi}) holds for some $k$ is
\begin{align*}
 O\left( \sum_{2\leq k \leq  N/(6\log_2|\S|)}\frac{1}{N}  k^2\left(\frac{e}{3} \right)^{k-2} \right) = O\left( \frac{1}{N} \sum_{k=0}^{\infty}k^2\left(\frac{e}{3} \right)^{k} \right)= O\left( \frac{1}{N}\right). 
\end{align*}

Now suppose that $\log_2 N\leq \log_2|\S|<N/12$. One may get in the same way, that
\begin{align}\label{eq:thm2-upper-3}
 &\P\left(\widetilde{\Phi}_k (G) >2\sqrt{N \left(k\log N+\log \binom{|\S|}{k}\right)}\right) \notag \\
 &\qquad \leq \sum_{\ell} \sum_{k_1,k_2,\dots, k_\ell} \sum_{s_1,s_2,\dots, s_\ell} \sum_{M} \sum_{D} \left( N ^k\binom{|\S|}{k}\right)^{-2}.
\end{align}

Estimating trivially the number of terms, we get that (\ref{eq:thm2-upper-3}) is less than
\begin{align*}
 \left( N ^k\binom{|\S|}{k}\right)^{-2} \sum_{\ell=1}^{k}\binom{k-1}{\ell-1} \binom{|\S|}{\ell}  N^{k+1} \leq 
 \left( N ^k\binom{|\S|}{k}\right)^{-2} 2^{k-1}\binom{|\S|}{k} N^{k+1}
 \leq  \frac{2^{k-1}}{N^{k-1}\binom{|\S|}{k}}.
\end{align*}

Summing over $2\leq k \leq  N/(6\log_2|\S|)$ 
we get, that the probability of (\ref{eq:thm2-upper-1}) for some $k$ is less than
\begin{align*}
 N \sum_{2\leq k \leq  N/(6\log_2|\S|)} \frac{2^k}{\binom{|\S|}{k} N^{k}}
<\frac{1}{N } \sum_{k=0}^{\infty} \frac{1}{N^k} =O\left(\frac{1}{N}\right)
\end{align*}
which gives the result.
\end{proof}

Next, we prove the lower estimate for $\widetilde{\Phi}_k(G)$ for typical generator $G$.

\begin{lemma}\label{lemma:thm2-lower}
Let $m=\lfloor N/3 \rfloor$.
For $1\leq \log_2|\S|\leq m^{1/4}$ we have 
\[
 \widetilde{\Phi}_k (G)>\frac{4}{9} \sqrt{N\left(\log \binom{N}{k} +k \log |\S|\right)},
\]
and for $m^{1/4}<\log_2|\S|<N/12$ we have 
\begin{align*}
 \widetilde{\Phi}_k (G)&>\frac{4}{9} \sqrt{N\left(k\log N + \log \binom{|\S|}{k}\right)}\\
                       &>\frac{4}{9} \sqrt{N\left(\log \binom{N}{k} +(1-o(1))k \log |\S|\right)} ,
\end{align*}
with probability tending to 1 as $N\rightarrow \infty$ for every integer $k$ with $2\leq k \leq N/(6\log_2|\S|)$.
\end{lemma}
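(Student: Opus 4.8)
The statement is a lower bound on a maximum, so the plan is to exhibit a large, structured collection $\mathcal{C}$ of admissible configurations and to show that, with probability $1-o(1)$, at least one of the corresponding sums exceeds the target $\lambda=\tfrac49\sqrt{N(\log\binom{N}{k}+k\log|\S|)}$. Throughout I fix the window length $M=m=\lfloor N/3\rfloor$. In the range $k\le|\S|$ I would take $\mathcal C$ to consist of the configurations with $\ell=k$ and $k_1=\dots=k_\ell=1$: one chooses $k$ distinct seeds $s_1,\dots,s_k\in\S$ together with $k$ shifts $0\le d_1<\dots<d_k\le N-m$, so that $|\mathcal C|\asymp\binom{N-m}{k}\,|\S|(|\S|-1)\cdots(|\S|-k+1)$, whose logarithm is $(1-o(1))(\log\binom{N}{k}+k\log|\S|)$. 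By the product-structure observation already used in Lemma~\ref{lemma:thm2-upper}, each such sum is distributed exactly as $S^{\pm}(m)$. In the complementary range $k>|\S|$ there are not enough distinct seeds, so I would instead use the balanced configurations $\ell=|\S|$, $k_i\approx k/|\S|$ exactly as in Lemma~\ref{lemma:thm2-upper}; these give $\log|\mathcal C|=(1-o(1))(k\log N+\log\binom{|\S|}{k})$ and yield the second displayed bound via $\log\binom{|\S|}{k}=(1-o(1))k\log|\S|$.

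The first step is anti-concentration. Writing $A_c$ for the event $\{V_c\ge\lambda\}$ and $X=\#\{c\in\mathcal C: A_c\}$, I would lower bound $\P(A_c)$. Since the relevant deviation is of order $\lambda/\sqrt m\asymp\sqrt{\log\binom Nk+k\log|\S|}$, which may exceed the range $o(m^{1/6})$ in which the de Moivre--Laplace form of Lemma~\ref{lemma:binom}(\ref{lemma:binom-1}) is valid, I would not use that asymptotic but rather the unconditional point-mass bound of Lemma~\ref{lemma:binom=}: a single term at height $\approx\lambda$ already gives $\P(A_c)\ge(1+o(1))\sqrt{2/(\pi m)}\,2^{-\lambda^2/m}$. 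With $\lambda^2/m=\tfrac{16}{27}(\log\binom Nk+k\log|\S|)(1+o(1))$ and since $\gamma:=\tfrac{16}{27}\log 2<1$, this yields $\P(A_c)\ge|\mathcal C|^{-\gamma+o(1)}$, hence $\mathbb E[X]=|\mathcal C|\,\P(A_c)\to\infty$; in fact $\mathbb E[X]$ is at least a fixed positive power of $N$ uniformly in $k$, which I will use for the union over $k$. The constant $4/9$ is calibrated precisely so that $\mathbb E[X]\to\infty$ using only the point-mass bound.

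To upgrade $\mathbb E[X]\to\infty$ into $\P(X\ge1)\to1$ I would run the second-moment method, $\P(X=0)\le\operatorname{Var}(X)/\mathbb E[X]^2$, and expand $\mathbb E[X^2]=\sum_{c,c'}\P(A_c\cap A_{c'})$. Here the seed freedom gives a decisive simplification in the range $k\le|\S|$: if $c$ uses some seed $s^{*}$ absent from $c'$, then conditioning on all bits except those of sequence $s^{*}$ turns $V_c$ into a conditionally Rademacher sum whose conditional law is $S^{\pm}(m)$ independently of the conditioning, so $\P(A_c\cap A_{c'})=\P(A_c)\P(A_{c'})$ \emph{exactly}. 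Thus the only pairs that can create positive correlation share the same $k$-element seed set, and for these the problem reduces to the pure-shift dependence already treated in the analysis of $C_k$ in \cite{CMS,AKMMR}: one bounds $\P(A_c\cap A_{c'})$ for two seed-to-shift assignments in terms of the number of seeds on which their windows overlap, and then counts pairs by this overlap. In the range $k>|\S|$ every configuration uses all seeds, so this shortcut is unavailable and the same overlap estimate must be applied per sequence to the $C_{k_i}$-type blocks. Showing that these correlated contributions sum to $o(\mathbb E[X]^2)$ is the heart of the matter and the step I expect to be the main obstacle, precisely because --- as one checks --- one cannot extract anywhere near $\binom Nk$ genuinely independent sums (windows of length $m=N/3$ admit only $O(1)$ disjoint translates), so independence alone cannot reach the target and the dependent pairs must be estimated rather than discarded.

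Finally, once $\P(X=0)=o(1/N)$ has been established for each fixed $k$ --- which requires the variance ratio to beat $1/N$, plausible since $\mathbb E[X]$ is polynomially large in $N$ --- a union bound over the at most $N$ integers $2\le k\le N/(6\log_2|\S|)$ gives the stated lower bound simultaneously for all $k$ with probability $1-o(1)$. The two parameter regimes $\log_2|\S|\le m^{1/4}$ and $m^{1/4}<\log_2|\S|<N/12$ are covered by the two configuration families introduced above, and the constant $\tfrac49<\sqrt{2/3}$ leaves ample slack for the losses incurred in the second-moment estimate.
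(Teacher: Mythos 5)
Your proposal correctly identifies the overall strategy (first moment plus a second-moment/Chung--Erd\H{o}s upgrade) and even the crucial observation that configurations using different seed sets are exactly uncorrelated. But the step you yourself flag as ``the heart of the matter and the step I expect to be the main obstacle'' --- bounding $\P(A_c\cap A_{c'})$ for pairs sharing the same seed set but with overlapping shift windows --- is left unproven, and it is not ``already treated'' in \cite{AKMMR}: in the lower-bound argument there (Claim 18) the shift--shift correlations are not estimated but \emph{eliminated by construction}. The paper does the same here. It restricts to configurations in which the first sequence appears with multiplicity one at shift $0$, so $v(s_1)$ depends only on the bits $e_1(s_1),\dots,e_m(s_1)$, while every other shift lies in $[m,2m]$, so $v_\ell(s_2,\dots,s_\ell,D)$ depends only on bits at positions $>m$. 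Conditioning on the latter, $\langle v(s_1),v_\ell\rangle$ is exactly an $S^{\pm}(m)$ sum, and one gets \emph{exact pairwise independence} of all the events $A_\ell(s_1,\dots,s_\ell,D)$ (Lemma~\ref{lemma:A-independent}), whence Lemma~\ref{lemma:unio} applies with $\varepsilon=0$ and no overlap analysis is needed. Your remark that ``independence alone cannot reach the target'' because windows of length $N/3$ have only $O(1)$ disjoint translates misses that only \emph{pairwise} independence is required, and that is obtained for all $\asymp\binom{m+1}{k-1}|\S|^k$ configurations simultaneously. With your choice of configurations (shifts $0\le d_1<\dots<d_k\le N-m$, all windows overlapping), the correlated pairs genuinely exist and would require a separate, nontrivial estimate that the proposal does not supply.

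There is also a quantitative gap in your union over $k$. You ask for $\P(X=0)=o(1/N)$ for each fixed $k$, which via $\P(X=0)\le 1/\mathbb{E}[X]$ needs $\mathbb{E}[X]\gg N$; for $k=2$ and $|\S|=2$ your own first-moment bound gives only $\mathbb{E}[X]\approx\left(\binom{N}{2}|\S|^2\right)^{c}$ with $c<1/2$, which is $o(N)$, so the bound $o(1/N)$ per $k$ is not available and a union bound over $\sim N$ values of $k$ does not close. The paper instead calibrates the threshold $r_k(m,\S)$ so that $p\cdot\mathcal{D}_k(\S)\ge\frac12 k^2\log N$, making the failure probability $O(1/(k^2\log N))$ for each $k$; the factor $k^2$ is put in precisely so that $\sum_k 1/(k^2\log N)=O(1/\log N)=o(1)$. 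These two points --- the missing independence-by-construction and the summability over $k$ --- are the essential content of the paper's proof, and both are absent from the proposal.
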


We start with the following form of Fact 16 in \cite{AKMMR}.

\begin{lemma}\label{lemma:log-binom}
 Let $m=\lfloor N/3\rfloor$. For every sufficiently large $N$, the followings hold.
 \begin{enumerate}[(i)]
  \item If $2\leq k\leq \log m$, then
  \begin{equation*}
   \log \binom{N/3}{k} \geq 0.98 \log \binom{N}{k}.
  \end{equation*}
  \item If $\log m< k\leq N/(6\log_2 |\S|)$, then
  \begin{equation*}
   \log \binom{N/3}{k}\geq \frac{1-10^{-10}}{3}\log \binom{N}{k}.
  \end{equation*}
 \end{enumerate}

\end{lemma}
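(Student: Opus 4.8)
The plan is to treat the two ranges of $k$ separately, reducing everything in each case to elementary estimates of the binomial coefficient: for small $k$ the crude bounds $\left(\tfrac{n}{k}\right)^k \le \binom{n}{k} \le \left(\tfrac{en}{k}\right)^k$, and for $k$ comparable to $n$ the entropy bounds $\tfrac{1}{n+1}e^{nH(k/n)} \le \binom{n}{k} \le e^{nH(k/n)}$, where $H(x) = -x\ln x - (1-x)\ln(1-x)$. Here $\binom{N/3}{k}$ denotes $\binom{m}{k}$ with $m = \lfloor N/3\rfloor$, and since both assertions involve only ratios of logarithms, the base of $\log$ is immaterial. The governing heuristic is $\log\binom{n}{k} \approx nH(k/n)$, so the two claims amount to comparing $mH(k/m)$ with $\tfrac13 NH(k/N)$ up to lower-order terms; both hold with plenty of room, which is exactly why the constants $0.98$ and $(1-10^{-10})/3$ are reachable.

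For part (i), where $2\le k\le \log m$, I would argue directly from the crude bounds. From $\binom{m}{k}\ge (m/k)^k$ and $\binom{N}{k}\le (eN/k)^k$ one obtains
\[
\frac{\log\binom{m}{k}}{\log\binom{N}{k}} \ge \frac{\log(m/k)}{\log(N/k)+1} = 1 - \frac{\log(N/m)+O(1)}{\log(N/k)+1},
\]
and since $m=\lfloor N/3\rfloor$ gives $\log(N/m)\to\log 3$ while $k\le \log m$ forces $\log(N/k)\ge \log N - \log\log m \to\infty$, the right-hand side tends to $1$ and exceeds $0.98$ once $N$ is large. The only point to notice is that a logarithmically small $k$ makes $\log(N/k)$ essentially $\log N$, which swamps the $O(1)$ loss coming from the factor $3$ and the $e$ in the upper bound.

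For part (ii), where $\log m < k \le N/(6\log_2|\S|)\le N/6$ (using $\log_2|\S|\ge 1$), the crude bounds no longer suffice: near $k=N/6$ they only give a ratio close to $\log 2/(\log 6+1)\approx 0.25<1/3$, since they miss the $\binom{2k}{k}\approx 4^k$ growth. Here I would pass to the entropy estimates $\ln\binom{N}{k}\le NH(k/N)$ and $\ln\binom{m}{k}\ge mH(k/m)-\ln(m+1)$, and the heart of the matter is the chain
\[
mH(k/m) \ge \tfrac{N}{3}H(3k/N) - \ln 2 \ge \tfrac{N}{3}H(k/N) - \ln 2 \ge \tfrac13\ln\binom{N}{k} - \ln 2 .
\]
The first inequality follows because $g(x)=xH(k/x)$ satisfies $g'(x)=-\ln(1-k/x)>0$, so $g$ is increasing, and because $N/3-m<1$ together with $k/m\le \tfrac12$ (indeed $m\ge 2k$) bounds $g(N/3)-g(m)$ by $\ln 2$; the decisive middle step is monotonicity of $H$ on $[0,\tfrac12]$, valid since $k\le N/6$ makes $3k/N\le \tfrac12$; the last uses $NH(k/N)\ge \ln\binom{N}{k}$. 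Combining with the lower bound for $\binom{m}{k}$ yields $\ln\binom{m}{k}\ge \tfrac13\ln\binom{N}{k}-\ln 2-\ln(m+1)$.

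It then remains to absorb the additive corrections, and this is the only delicate point. One needs $\ln 2+\ln(m+1)\le \tfrac{10^{-10}}{3}\ln\binom{N}{k}$; since $j\mapsto j\log(N/j)$ is increasing for $j\le N/e$ and $k>\log m$, the worst case is $k\approx \log m$, where $\ln\binom{N}{k}\ge k\ln(N/k)$ is of order $(\ln N)^2$ and hence overwhelms the $O(\ln N)$ corrections, so the relative error tends to $0$ and falls below $10^{-10}/3$ for all sufficiently large $N$. The main obstacle is thus conceptual rather than computational: one must resist the crude bounds in range (ii) and instead exploit that packing the same number $k$ of choices into a third of the ground set raises the per-symbol entropy from $H(k/N)$ to $H(3k/N)$, exactly offsetting the loss of the factor $\tfrac13$ and keeping the ratio above $1/3$ all the way up to $k=N/6$.
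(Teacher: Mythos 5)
Your proof is correct, but it is worth noting that the paper itself offers no proof of this lemma at all: it simply imports the statement as ``the following form of Fact 16 in \cite{AKMMR}'' and moves on. So your argument is not so much an alternative route as the only self-contained one on the table. The two-regime split you use (crude bounds $\left(\tfrac{n}{k}\right)^k \le \binom{n}{k}\le \left(\tfrac{en}{k}\right)^k$ for $k\le\log m$, entropy bounds for $\log m<k\le N/6$) is exactly the kind of argument underlying the cited Fact 16, and all the individual steps check out: the identity $g'(x)=-\ln(1-k/x)$ for $g(x)=xH(k/x)$ is correct; the hypothesis $k\le N/6$ with $k$ an integer does give $2k\le\lfloor N/3\rfloor=m$, so the $\ln 2$ bound on $g(N/3)-g(m)$ and the monotonicity of $H$ on $[0,\tfrac12]$ both apply; and the additive corrections $\ln 2+\ln(m+1)=O(\ln N)$ are indeed negligible against $\ln\binom{N}{k}\ge k\ln(N/k)\gg(\ln N)^{2-o(1)}$ in range (ii), since $j\mapsto j\ln(N/j)$ increases up to $j=N/e$. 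Your observation that the crude bounds genuinely fail near $k=N/6$ (giving a ratio around $0.25<1/3$) correctly identifies why the entropy refinement is unavoidable in the second range. What your write-up buys over the paper is that the reader need not chase the constant $(1-10^{-10})/3$ through the reference; what it costs is length for a statement the author evidently regarded as routine.
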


Let $m=\lfloor N/3 \rfloor$ and for $1\leq \log_2 |\S|\leq m^{1/4}$
consider the maximal $r=r_k(m,\S)\in \N$ such that
\begin{equation*}
 \P\left(S(m,1/2)\geq \frac{1}{2}(m+r) \right)\geq \frac{k^2 \log N}{\binom{m+1}{k-1}|\S|^{k} }
\end{equation*}
holds, and for $m^{1/4}<\log_2 |\S|\leq N/12$ consider the maximal $r=r_k(m,\S)\in \N$ such that
\begin{equation*}
 \P\left(S(m,1/2)\geq \frac{1}{2}(m+r) \right)\geq \frac{k^2 \log N}{(m+1)^{k-1}\binom{|\S|}{k} }
\end{equation*}
holds.

We give a lower estimate to $r_k(m,\S)$ for large and small $\S$ separately.

\begin{lemma}\label{lemma:r}
For every sufficiently large $N$ and for $1\leq \log_2 |\S|\leq m^{1/4}$ the followings hold.
\begin{enumerate}[(i)]
 \item \label{lemma:r-kisk-r} For $2\leq k\leq \log m$ we have
\begin{equation*}
 r_k(m,\S)\geq 0.99 \sqrt{2m \left(\log\binom{m+1}{k-1} +k\log |\S|\right)}.
\end{equation*}

\item \label{lemma:r-nagyk-r} For $\log m<k\leq N/(6 \log_2 |\S|)$ we have
\begin{equation*}
 r_k(m,\S)\geq  (1-10^{-10})\sqrt{\frac{1}{\log 2} m \left(\log\binom{m+1}{k-1} +k\log |\S|\right)}.
\end{equation*}

\item \label{lemma:r-osszesk-r} For $2\leq k\leq  N/(6 \log_2 |\S|)$ we have
\begin{equation*}
 r_k(m,\S)\geq \frac{4}{9} \sqrt{N \left(\log\binom{N}{k} +k\log |\S|\right)}.
\end{equation*}
\end{enumerate}
\end{lemma}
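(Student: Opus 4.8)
The plan is to handle the three assertions separately: (i) and (ii) amount to solving the defining inequality for $r_k$ using the appropriate lower estimate for the tail of $S(m,1/2)$, while (iii) follows by feeding (i) and (ii) into Lemma \ref{lemma:log-binom} to pass from $m=\lfloor N/3\rfloor$ back to $N$. Throughout write $T=\frac{k^2\log N}{\binom{m+1}{k-1}|\S|^{k}}$ for the threshold and $L=\log\binom{m+1}{k-1}+k\log|\S|$, so that $\log(1/T)=L-O(\log k+\log\log N)$. Since $r_k$ is by definition the largest $r$ with $\P(S(m,1/2)\geq\frac12(m+r))\geq T$, in each case I would locate the critical $r$ at which the tail probability equals $T$ and then check that the neglected corrections are of smaller order than $L$.

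For (i), with $2\leq k\leq\log m$, I would first verify that the offset $c\approx r/(2\sqrt m)$ (defined via $c\sqrt m=\frac r2+\{m/2\}$) satisfies $c\to\infty$ and $c=o(m^{1/6})$: in this range $L=O(m^{1/4}\log m)$, hence $r=O(\sqrt{mL})$ and $c=O(m^{1/8}(\log m)^{1/2})=o(m^{1/6})$. This licenses the sharp de Moivre--Laplace asymptotic (\ref{eq:ML1b}), $\P\sim e^{-2c^2}/(2c\sqrt{2\pi})$. Equating with $T$ and taking logarithms gives $2c^2=L(1-o(1))$ (the $\log(2c\sqrt{2\pi})$ and $\log(k^2\log N)$ terms being $o(L)$), whence $r=2c\sqrt m\,(1-o(1))=\sqrt{2mL}\,(1-o(1))$, which exceeds $0.99\sqrt{2mL}$ for large $N$.

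For (ii), with $\log m<k$, the offset $c$ may reach $\Theta(\sqrt m)$, so (\ref{eq:ML1b}) no longer applies; instead I would bound the tail from below by a single central term and invoke Lemma \ref{lemma:binom=}, namely $\P(S(m,1/2)=\lfloor m/2\rfloor+c')\geq(1+o(1))2^{-4(c'+\{m/2\})^2/m}\sqrt{2/(\pi m)}$ with $c'\approx r/2$, so that the bound is $\approx 2^{-r^2/m}$ up to polynomial factors. Equating with $T$ and solving gives $r^2=\frac{m}{\log 2}L\,(1-o(1))$, the factor $1/\log 2$ being precisely the source of the different constant; here the corrections are $O(m\log m)$ and genuinely lower order because $L\gtrsim(\log m)^2$ throughout the range. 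This yields the claimed $(1-10^{-10})\sqrt{\frac{m}{\log 2}L}$.

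Finally, for (iii) I would combine the two estimates with Lemma \ref{lemma:log-binom}, applying its part (i) on $2\leq k\leq\log m$ and its part (ii) on $\log m<k$, to replace $\log\binom{m+1}{k-1}$ (together with $m\approx N/3$) by a constant multiple of $\log\binom{N}{k}$, carrying the term $k\log|\S|$ through unchanged, and then absorbing the $k-1\to k$ shift, which is $O(\log N)$ and hence negligible against $L$ in both subranges. The main obstacle is precisely this constant-chasing. The delicate point is that the crude factor $1/3$ in Lemma \ref{lemma:log-binom}(ii) does not by itself reach $4/9$ when $\log\binom{N}{k}$ dominates $k\log|\S|$; what rescues the estimate is the hypothesis $k\leq N/(6\log_2|\S|)$, which forces $k/N\leq 1/6$ and thereby keeps the true ratio $\log\binom{N/3}{k}/\log\binom{N}{k}$ bounded below by about $0.51>0.41$, so the $\log\binom{N}{k}$-coefficient clears $16/81$; combined with the ample slack $1/(3\log 2)\approx0.48$ in the $k\log|\S|$-coefficient, the clean constant $4/9$ then comes out with room to spare.
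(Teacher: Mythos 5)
Parts (i) and (ii) of your proposal are essentially the paper's proof: the paper also splits at $k=\log m$, uses the de Moivre--Laplace tail (\ref{eq:ML1b}) after checking $c=o(m^{1/6})$ in the first range, and bounds the tail from below by the single term of Lemma \ref{lemma:binom=} with $c\approx r/2$ in the second; the only cosmetic difference is that the paper substitutes the candidate $r$ and verifies the defining inequality rather than solving for the critical $r$. Two caveats on (ii): your ``corrections are $O(m\log m)$'' bookkeeping is fine since $L\gtrsim(\log m)^2$ there, but neither you nor the paper checks the hypothesis $c\leq\lceil m/2\rceil$ of Lemma \ref{lemma:binom=}; for $|\S|=2$ and $k$ near $N/6$ one gets $L\approx\frac{3}{2}m\log 2$, hence $r\approx m\sqrt{3/2}>m$ and $c>m/2$, so the single-term bound is vacuous on part of the stated range. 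This defect is inherited from the paper, not introduced by you, but it is a step that literally fails.

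Part (iii) is where you genuinely diverge, and your instinct is sound: the paper's own display for $\log m<k$ passes from $\sqrt{\lfloor N/3\rfloor\,(\log\binom{m+1}{k-1}+k\log|\S|)}$ to $(1+o(1))\frac{1}{\sqrt{3}}\sqrt{N(\log\binom{N}{k}+k\log|\S|)}$, which tacitly requires $\log\binom{m+1}{k-1}\geq(1-o(1))\log\binom{N}{k}$ --- much more than Lemma \ref{lemma:log-binom}(ii) provides, and false for $k\asymp N$. You correctly note that the factor $\frac13$ alone yields only $\frac{1}{\sqrt{9\log 2}}\approx 0.40<\frac49$ on the $\log\binom{N}{k}$ term and that the rescue is the sharper ratio bound forced by $k\leq N/6$. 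However, your key claim $\log\binom{N/3}{k}\geq 0.51\log\binom{N}{k}$ is asserted, not proved: it cannot be extracted from the crude bounds $(a/b)^b\leq\binom{a}{b}\leq(ea/b)^b$ (those give only $\approx 0.25$ at $k=N/6$); you need the Stirling/entropy form $\log\binom{N}{k}\sim N H(k/N)$ together with a monotonicity argument showing $H(3x)/(3H(x))$ is minimized at $x=1/6$, where it equals $\log 2/(3H(1/6))\approx 0.513$. This missing lemma is the real content of (iii) and must be supplied. A smaller slip: the $k-1\to k$ shift is not ``$O(\log N)$ hence negligible against $L$'' when $k=2$, since there it costs a constant fraction of $\log\binom{N}{k}$; the paper absorbs it via $\binom{N/3}{k-1}\geq\binom{N/3}{k}^{1/2}$, paid for by the factor $\sqrt{2}$ available in case (i), and your write-up should do the same.
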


\begin{proof}[Lemma \ref{lemma:r}]
First we remark that for all $2\leq k \leq N/(6 \log_2 |\S|)$, we have
\begin{equation*}
k^2\log N \leq \binom{m+1}{k-1}^{o(1)},
\end{equation*}
(see e.g. \cite{AKMMR}).

First assume, that $k\leq \log m$.
Let
\begin{equation*}
 r=\left\lceil 0.99 \sqrt{2m \left(\log\binom{m+1}{k-1} +k\log |\S|\right)}\right\rceil
\end{equation*}
and
\begin{equation*}
 c=\frac{r+1}{2\sqrt{m}}=(1+o(1))0.99 \sqrt{\frac{1}{4}  \left(\log\binom{m+1}{k-1} +k\log |\S|\right)}.
\end{equation*}
Since now $c=o\left(m^{1/6}\right)$, by (\textit{\ref{lemma:binom-1}})  of Lemma \ref{lemma:binom} we have
\begin{align*}
 &\P\left(S(m,1/2)\geq \frac{1}{2}(m+r) \right) \geq \P\left(S(m,1/2)\geq \left\lfloor \frac{m}{2}  \right\rfloor +c \sqrt{m} \right)\\
 =&\frac{e^{-2c^2}}{2c\sqrt{2\pi}}(1+o(1)) 
 \geq \frac{1}{4}\left(\binom{m+1}{k-1} |\S|^k\right)^{-0.99   } \geq \frac{k^2 \log N }{\binom{m+1}{k-1} |\S|^k}
\end{align*}
which proves (\textit{\ref{lemma:r-kisk-r}}).

To prove (\textit{\ref{lemma:binom-2}}) assume, that $\log m<k\leq N/(6 \log_2 |\S|)$.
Let
\begin{equation*}
 r=\left\lceil (1-10^{-10})\sqrt{\frac{1}{\log 2} m \left(\log\binom{m+1}{k-1} +k\log |\S|\right)}\right\rceil
\end{equation*}
and
\begin{equation*}
 c=\left\lceil\frac{r+1}{2} \right\rceil=(1+o(1))\frac{1-10^{-10}}{2 \sqrt{\log 2}} \sqrt{m\left( \log\binom{m+1}{k-1} +k\log |\S|\right)}.
\end{equation*}
Since now $0<c<m/2$, by Lemma \ref{lemma:binom=}   we have
\begin{align*}
 &\P\left(S(m,1/2)\geq \frac{1}{2}(m+r) \right) \geq \P\left(S(m,1/2)\geq \left\lfloor \frac{m}{2}  \right\rfloor +c  \right)\notag\\
 \geq & (1+o(1))2^{-4(c+1/2)^2/m}\sqrt{\frac{2}{\pi m}} 
 \geq  (1+o(1))2^{-r^2/m} 2^{-(6r+9)/m}\sqrt{\frac{2}{\pi m}}\notag\\
 \geq &\left(\binom{m+1}{k-1}|\S|^k\right)^{-1+10^{-10}} 2^{-(6r+9)/m}\sqrt{\frac{2}{\pi m}}\notag\\
 \geq & (1+o(1))\left(\binom{m+1}{k-1}|\S|^k\right)^{-1+10^{-10}}\geq \frac{k^2 \log N}{\binom{m+1}{k-1}|\S|^k}.
\end{align*}

Finally, (\textit{\ref{lemma:r-osszesk-r}}) follows from (\textit{\ref{lemma:r-kisk-r}}), (\textit{\ref{lemma:r-nagyk-r}}) and Lemma \ref{lemma:log-binom} in the same way as in \cite{AKMMR}. Namely, if $2\leq k \leq \log m$, then 
\[
 \binom{m+1}{k-1}\geq \binom{N/3}{k-1}\geq \binom{N/3}{k}^{1/2},
\]
thus 
\begin{align*}
 r_k(m,\S)&\geq 0.99 \sqrt{2\left \lfloor \frac{N}{3} \right\rfloor \left(\log\binom{m+1}{k-1} +k\log |\S|\right)}\\
     &\geq (1+o(1))\frac{0.99}{\sqrt{3}}\sqrt{ N \left(\log\binom{N/3}{k} +k\log |\S|\right)}\\
     &\geq \frac{4}{9}\sqrt{ N \left(\log\binom{N}{k} +k\log |\S|\right)}.
\end{align*}

On the other hand, if $\log m < k \leq N/(6 \log_2 |\S|)$, then
\[
 \binom{m+1}{k-1}\geq\binom{N/3}{k}^{1-o(1)},
\]
thus
\begin{align*}
 r_k(m,\S)& \geq \frac{1-10^{-10}}{\sqrt{\log 2}}\sqrt{\left\lfloor\frac{N}{3}\right\rfloor\left(\log\binom{m+1}{k-1} +k\log |\S|\right)}\\
 & \geq (1+o(1))\frac{1-10^{-10}}{\sqrt{3\log 2}}\sqrt{N\left(\log\binom{N}{k} +k\log |\S|\right)}\\
 & \geq \frac{4}{9}\sqrt{N\left(\log\binom{N}{k} +k\log |\S|\right)}.
\end{align*}
\end{proof}

The lower estimate to $r_k(m,\S)$ for small $\S$ can be prove similarly.

\begin{lemma}\label{lemma:r-2}
For every sufficiently large $N$ and for $m^{1/4}<\log_2 |\S|<N/12$
\begin{equation*}
 r_k(m,\S)\geq \frac{4}{9} \sqrt{N \left(\log\binom{N}{k} +k\log |\S|\right)}.
\end{equation*}
holds for $2\leq k\leq  N/(6 \log_2 |\S|)$.
\end{lemma}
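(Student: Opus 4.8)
The plan is to run the argument of Lemma \ref{lemma:r} almost verbatim, with the ``size'' $\binom{m+1}{k-1}|\S|^k$ replaced by the quantity governing the present threshold, namely $(m+1)^{k-1}\binom{|\S|}{k}$; write $D=D_k(m,\S)=(k-1)\log(m+1)+\log\binom{|\S|}{k}$ for its logarithm. The decisive simplification in this regime is that $\log_2|\S|>m^{1/4}$ forces $\log|\S|$ to dwarf $\log N$: since $m^{1/4}=\Theta(N^{1/4})$ and $\log N=o(N^{1/4})$, we have $\log N=o(\log|\S|)$. Consequently $\log\binom{N}{k}\le k\log N=o(k\log|\S|)$ and $(k-1)\log(m+1)=o(k\log|\S|)$, while $\log\binom{|\S|}{k}\ge k\log(|\S|/k)=(1-o(1))k\log|\S|$ because $\log k\le\log N=o(\log|\S|)$. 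Hence both the target and $D$ collapse to the same leading term: $\log\binom{N}{k}+k\log|\S|=(1+o(1))k\log|\S|$ and $D\ge\log\binom{|\S|}{k}\ge(1-o(1))\big(\log\binom{N}{k}+k\log|\S|\big)$. Because $\log\binom{N}{k}$ is now negligible, I will not need the delicate passage from $\binom{N/3}{k}$ to $\binom{N}{k}$ (Lemma \ref{lemma:log-binom}) that the small-$|\S|$ case required, and no splitting of the range of $k$ is necessary.

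Next I would choose $r$ and apply Lemma \ref{lemma:binom=}. Set
\[
 r=\left\lceil(1-10^{-10})\sqrt{\tfrac{1}{\log 2}\,m\,D}\right\rceil,\qquad c=\left\lceil\tfrac{r+1}{2}\right\rceil,
\]
so that $\{S(m,1/2)\ge\tfrac12(m+r)\}\supseteq\{S(m,1/2)=\lfloor m/2\rfloor+c\}$. A size check is needed first: the bound $\log_2|\S|<N/12$ together with $k\le N/(6\log_2|\S|)$ gives $k\log|\S|\le\frac{m\log 2}{2}(1+o(1))$, whence $\frac{1}{\log 2}mD\le\frac{m^2}{2}(1+o(1))$, so $r\le\frac{m}{\sqrt 2}(1+o(1))$ and $0<c<m/2$; thus Lemma \ref{lemma:binom=} applies. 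It gives
\[
 \P\!\left(S(m,1/2)\ge\tfrac12(m+r)\right)\ge(1+o(1))2^{-4(c+\{m/2\})^2/m}\sqrt{\tfrac{2}{\pi m}}\ge 2^{-r^2/m}\,2^{-O(r)/m}\sqrt{\tfrac{2}{\pi m}}(1+o(1)),
\]
using $4(c+\{m/2\})^2=r^2+O(r)$. By the choice of $r$,
\[
 2^{-r^2/m}=\Big((m+1)^{k-1}\binom{|\S|}{k}\Big)^{-(1-10^{-10})^2}.
\]

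The final step is to absorb the lower-order factors into the slack created by the gap $(1-10^{-10})^2<1-10^{-10}$. As in the remark opening the proof of Lemma \ref{lemma:r}, one checks here that $k^2\log N=\big((m+1)^{k-1}\binom{|\S|}{k}\big)^{o(1)}$, since $\log\big((m+1)^{k-1}\binom{|\S|}{k}\big)\ge(1-o(1))k\log|\S|\ge m^{1/4}\log 2\to\infty$ while $\log(k^2\log N)=O(\log N)=o(m^{1/4})$. The residual factor $\big((m+1)^{k-1}\binom{|\S|}{k}\big)^{10^{-10}}$ is therefore exponentially large and swallows $k^2\log N$, the constant-order factor $2^{-O(r)/m}$, and the vanishing $\sqrt{2/\pi m}$. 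This yields
\[
 \P\!\left(S(m,1/2)\ge\tfrac12(m+r)\right)\ge\frac{k^2\log N}{(m+1)^{k-1}\binom{|\S|}{k}},
\]
so $r_k(m,\S)\ge r$. Substituting $m=\frac N3(1+o(1))$ and $D\ge(1-o(1))\big(\log\binom{N}{k}+k\log|\S|\big)$ gives
\[
 r_k(m,\S)\ge(1-o(1))\frac{1}{\sqrt{3\log 2}}\sqrt{N\Big(\log\binom{N}{k}+k\log|\S|\Big)},
\]
and since $1/\sqrt{3\log 2}=0.693\ldots>4/9$, the claimed bound follows for all sufficiently large $N$. The main obstacle here is not conceptual but the bookkeeping of this last step: the only genuinely dangerous correction is the factor $2^{-O(r)/m}$, which is merely a constant bounded away from $0$ rather than $1+o(1)$ (precisely because $r$ can be as large as $\Theta(m)$, so $r/m\not\to0$); one must verify it is overwhelmed by the exponential slack. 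This is exactly where the regime hypotheses enter — $\log_2|\S|<N/12$ keeps $c<m/2$ and $r/m$ bounded, while $\log_2|\S|>m^{1/4}$ both renders $\log N$ negligible and makes the slack exponentially large.
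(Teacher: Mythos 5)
Your proof is correct and is essentially the argument the paper intends: the paper omits the proof of this lemma entirely, saying only that it goes ``similarly'' to Lemma~\ref{lemma:r}, and you have carried out exactly that adaptation, replacing $\binom{m+1}{k-1}|\S|^k$ by $(m+1)^{k-1}\binom{|\S|}{k}$ and applying Lemma~\ref{lemma:binom=} with the analogous choice of $r$ and $c$. Your observation that $\log N = o(\log|\S|)$ in this regime, which lets you skip the case split on $k$ and the passage through Lemma~\ref{lemma:log-binom} while still clearing the constant $4/9$ via $1/\sqrt{3\log 2} > 4/9$, is a sound and slightly cleaner way to finish.
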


We also need the following lemma (\cite[Lemma 19]{AKMMR}).

\begin{lemma}\label{lemma:unio}
 Let $A_1, A_2, \dots , A_M$ be events in a probability space, each with probability at least $p$. Let $\varepsilon \geq 0$ be given, and suppose that
 \begin{equation*}
  \P(A_i \cap A_j)\leq p^2(1+\varepsilon)
 \end{equation*}
for all $i\neq j$. Then
\begin{align*}
 \P\left(\bigcup_{i=1}^M A_i \right)\geq 1-\varepsilon-\frac{2}{Mp}.
\end{align*}

\end{lemma}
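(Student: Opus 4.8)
The plan is to prove this via the second moment method, in the form of the Chung--Erd\H{o}s inequality. Introduce the counting random variable $X=\sum_{i=1}^M \mathbf{1}_{A_i}$, so that the event $\bigcup_{i=1}^M A_i$ coincides with $\{X>0\}$. The starting point is the Cauchy--Schwarz bound
\[
 (\mathbb{E}X)^2 = \left(\mathbb{E}(X\,\mathbf{1}_{X>0})\right)^2 \leq \mathbb{E}(X^2)\cdot \P(X>0),
\]
which rearranges to $\P\left(\bigcup_{i=1}^M A_i\right)=\P(X>0)\geq (\mathbb{E}X)^2/\mathbb{E}(X^2)$. The entire argument then reduces to controlling the first two moments of $X$ using the two hypotheses.

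For the moments, write $S_1=\sum_{i=1}^M \P(A_i)=\mathbb{E}X$, so that the lower bound $\P(A_i)\geq p$ gives $S_1\geq Mp$. For the second moment, expand $\mathbb{E}(X^2)=\sum_{i,j}\P(A_i\cap A_j)=S_1+\sum_{i\neq j}\P(A_i\cap A_j)$, and apply the correlation hypothesis $\P(A_i\cap A_j)\leq p^2(1+\varepsilon)$ to the off-diagonal terms to obtain $\mathbb{E}(X^2)\leq S_1+M(M-1)p^2(1+\varepsilon)\leq S_1+M^2p^2(1+\varepsilon)$. Combining with the Cauchy--Schwarz bound yields
\[
 \P\left(\bigcup_{i=1}^M A_i\right)\geq \frac{S_1^2}{S_1+M^2p^2(1+\varepsilon)}.
\]

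To finish I would exploit monotonicity: the function $g(s)=s^2/(s+c)$ with $c=M^2p^2(1+\varepsilon)>0$ has derivative $g'(s)=s(s+2c)/(s+c)^2>0$, so it is increasing on $s>0$. Since $S_1\geq Mp$, substituting $s=Mp$ only decreases the bound, giving $\P\left(\bigcup A_i\right)\geq Mp/\bigl(1+Mp(1+\varepsilon)\bigr)$. Writing $t=Mp$, the complementary quantity is $1-\frac{t}{1+t+t\varepsilon}=\frac{1+t\varepsilon}{1+t+t\varepsilon}$, and since the denominator is at least $t$ one splits this as $\frac{t\varepsilon}{1+t+t\varepsilon}+\frac{1}{1+t+t\varepsilon}\leq \varepsilon+\frac{1}{t}\leq \varepsilon+\frac{2}{Mp}$, which is exactly the claimed bound $1-\varepsilon-2/(Mp)$. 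I expect no serious obstacle here: every step is elementary, and the only point requiring a moment's care is the choice of the Cauchy--Schwarz (Chung--Erd\H{o}s) route rather than Bonferroni-type inclusion--exclusion, since it is precisely the second-moment formulation that converts the pairwise bound $\P(A_i\cap A_j)\leq p^2(1+\varepsilon)$ into the additive error terms $\varepsilon$ and $2/(Mp)$.
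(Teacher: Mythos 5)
Your proof is correct: the Cauchy--Schwarz (Chung--Erd\H{o}s) step, the moment bounds $\mathbb{E}X\geq Mp$ and $\mathbb{E}(X^2)\leq S_1+M^2p^2(1+\varepsilon)$, the monotonicity of $s\mapsto s^2/(s+c)$, and the final splitting of $\frac{1+t\varepsilon}{1+t+t\varepsilon}$ all check out, and in fact you obtain the slightly sharper error $\varepsilon+1/(Mp)$ before relaxing to the stated $2/(Mp)$. Note that the paper itself gives no proof of this statement --- it is imported verbatim as Lemma 19 of the cited work of Alon, Kohayakawa, Mauduit, Moreira and R\"odl, whose argument is the same second-moment method you use, so your route coincides with the source rather than offering a genuinely different one.
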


Now we are in the state to prove Lemma \ref{lemma:thm2-lower}.

\begin{proof}[Lemma \ref{lemma:thm2-lower}] 
First we remark, that it is enough  to show that
\begin{equation}\label{eq:cel}
 \widetilde{\Phi}_k(G)\leq r_k(m,\S)
\end{equation}
holds with probability at most $ O(1/k^2 \log N)$. 

Indeed, summing over all $2\leq k \leq N/(6\log_2|\S|)$ we get that (\ref{eq:cel}) holds for \textit{some} $k$ with $2\leq k \leq N/(6\log_2|\S|)$ with probability $O(1/\log N)=o(1)$. Whence (\ref{eq:cel}) does not hold for all $2\leq k \leq N/(6\log_2|\S|)$ with probability $1-o(1)$, which proves the lemma.

We prove the lemma for small $\S$, for large $\S$ one can obtain the result in the same way referring to Lemma \ref{lemma:r-2} instead of Lemma \ref{lemma:r}. So assume, that $1\leq \log_2|\S|\leq m^{1/4}$.

For $s_1\in\S$ let
 \begin{equation*}
  v(s_1)=(e_1(s_1), e_2(s_1), \dots, e_m(s_1))
 \end{equation*}
and for $2\leq \ell \leq k$, for $s_2, \dots, s_\ell\in\S$, for $(k_2,k_3,\dots, k_\ell)$ with $k_2+k_3+\dots +k_\ell=k-1$ and for 
$D=(0,d^{2}_1,d^{2}_2,\dots, d^{2}_{k_2},\dots,d^{\ell}_1,d^{\ell}_2,\dots, d^{\ell}_{k_\ell})$ with $m\leq d^{j}_1<\dots< d^{j}_{k_\ell}\leq 2m$ for $j=2,3,\dots, \ell$ let
 \begin{align*}
  v_\ell(s_2,s_3,\dots,s_\ell, D )=\left(\prod_{t=2}^{\ell} e_{1+d^{t}_1}(s_t)\cdots e_{1+d^{t}_{k_t}}(s_t),\dots, \prod_{t=2}^{\ell} e_{m+d^{t}_1}(s_t)\cdots e_{m+d^{t}_{k_t}}(s_t) \right)\\
 \end{align*}
 
Let $A_\ell(s_1,s_2,\dots, s_\ell,D)$ be the event
\begin{equation*}
 |\langle v(s_1), v_\ell(s_2,s_3,\dots,s_\ell, D ) \rangle|  \geq  r_k(m,\S).
\end{equation*}
Since $\langle v(s_1), v_\ell(s_2,s_3,\dots,s_\ell, D ) \rangle$ has the same distribution as $S(m,1/2)$,
we have
\[
 p=\P\left(A_\ell(s_1,s_2,\dots, s_\ell,D)\right)=2\cdot \P\left(S(m,1/2)\geq \frac{1}{2}(m+r_k(m,\S)) \right).
\]

One can obtain in the same way as \cite[Claim 18]{AKMMR}, that the events $A_\ell$ are pairwise independent.

\begin{lemma}\label{lemma:A-independent}
 For  $\{s_1,s_2,\dots, s_\ell\}\neq \{s'_1,s'_2,\dots, s'_{\ell '}\}$ or $D\neq D'$ we have
 \[
  \P(A_\ell (s_1,s_2,\dots, s_\ell,D) \cap A_{\ell'}(s'_1,s'_2,\dots, s'_{\ell'},D'))=p^2.
 \]
\end{lemma}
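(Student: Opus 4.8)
The plan is to reduce the statement first to the independence of two integer-valued sums, and then to a purely combinatorial independence of monomials in the underlying bits. The event $A_\ell(s_1,\dots,s_\ell,D)$ depends only on the random variable $X=\langle v(s_1),v_\ell(s_2,\dots,s_\ell,D)\rangle$, and likewise $A_{\ell'}(s_1',\dots,s_{\ell'}',D')$ depends only on $X'=\langle v(s_1'),v_{\ell'}(s_2',\dots,s_{\ell'}',D')\rangle$. As already noted, each of $X,X'$ has the law of $S(m,1/2)$ (equivalently of $S^{\pm}(m)=2S(m,1/2)-m$), so $\P(A_\ell)=\P(A_{\ell'})=p$. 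Hence it suffices to prove that $X$ and $X'$ are independent, for then $\P(A_\ell\cap A_{\ell'})=\P(A_\ell)\P(A_{\ell'})=p^2$. Writing $X=\sum_{n=1}^m z_n$ and $X'=\sum_{n=1}^m z_n'$ with $z_n,z_n'\in\{-1,1\}$ the $n$-th coordinatewise products, I would prove independence via the stronger claim that $(z_1,\dots,z_m,z_1',\dots,z_m')$ is uniform on $\{-1,1\}^{2m}$, whence its two halves are independent and $X\perp X'$. Since the $s_i$ in one event are distinct and the shifts inside each sequence are strictly increasing, each $z_n$ and each $z_n'$ is a square-free monomial in the i.i.d.\ uniform bits $e_j(s)$, so this uniformity is equivalent to $\mathbb{F}_2$-linear independence of the exponent vectors of these $2m$ monomials, i.e.\ to: no nonempty sub-product $\prod_{n\in P}z_n\prod_{n\in Q}z_n'$ is identically $1$.

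So I would assume $\prod_{n\in P}z_n\prod_{n\in Q}z_n'\equiv 1$ with $(P,Q)\neq(\emptyset,\emptyset)$ and seek a contradiction. The decisive structural feature is that $v(s_1)$ uses only the bits of $s_1$ in positions $1,\dots,m$, while every factor of $v_\ell$ and $v_{\ell'}$ sits in positions $\geq m+1$. Consequently the anchor bit $e_n(s_1)$ occurs among the $2m$ monomials only in $z_n$, and, if $s_1'=s_1$, also in $z_n'$, and nowhere else. Reading off the exponents of $e_n(s_1)$ and of $e_n(s_1')$ modulo $2$, I get: if $s_1\neq s_1'$ then $[n\in P]=[n\in Q]=0$ for all $n$, so $P=Q=\emptyset$, a contradiction; and if $s_1=s_1'$ then $[n\in P]+[n\in Q]\equiv 0$, forcing $P=Q=:R$. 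In the latter case the anchor factors cancel, $z_nz_n'=w_nw_n'$, and the identity collapses to $\prod_{n\in R}w_nw_n'\equiv 1$ with $R\neq\emptyset$, where $w_n=\prod_{t=2}^{\ell}\prod_i e_{n+d_i^t}(s_t)$ and $w_n'$ is defined analogously from $D'$.

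The heart of the matter, and the step I expect to need the most care, is ruling this out. For a fixed sequence $s$ let $T_D(s)$ be the set of shifts of $s$ occurring in $D$ (empty unless $s\in\{s_2,\dots,s_\ell\}$) and define $T_{D'}(s)$ from $D'$. Because the sequences in one event are distinct and the shifts of a given sequence are distinct, the exponent of the variable $e_p(s)$ in $\prod_{n\in R}w_nw_n'$ equals, modulo $2$, the coefficient of $x^p$ in $\rho(x)\bigl(\sigma_s(x)+\sigma_s'(x)\bigr)$, where $\rho(x)=\sum_{n\in R}x^n$, $\sigma_s(x)=\sum_{\delta\in T_D(s)}x^\delta$ and $\sigma_s'(x)=\sum_{\delta\in T_{D'}(s)}x^\delta$ in $\mathbb{F}_2[x]$. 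Thus $\prod_{n\in R}w_nw_n'\equiv 1$ says $\rho(x)\bigl(\sigma_s(x)+\sigma_s'(x)\bigr)=0$ in $\mathbb{F}_2[x]$ for every $s$. Since $R\neq\emptyset$ gives $\rho\neq 0$ and $\mathbb{F}_2[x]$ is an integral domain, I conclude $\sigma_s=\sigma_s'$, i.e.\ $T_D(s)=T_{D'}(s)$, for every $s$. Together with $s_1=s_1'$ this means the two parameter choices carry exactly the same sequence–shift data, that is $\{s_1,\dots,s_\ell\}=\{s_1',\dots,s_{\ell'}'\}$ and $D=D'$, contradicting the hypothesis. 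Hence no nonempty sub-product is trivial, the $2m$ monomials are independent, and $\P(A_\ell\cap A_{\ell'})=p^2$ as claimed. This is the cross-correlation analogue of Claim 18 in \cite{AKMMR}; the passage to $\mathbb{F}_2[x]$ is what here replaces the single-sequence maximal-index argument used there.
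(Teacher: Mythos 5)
Your argument is correct and complete. Bear in mind that the paper does not actually prove this lemma: it only asserts that it ``can be obtained in the same way as Claim 18 of \cite{AKMMR}'', which treats the single-sequence case (events indexed by shift tuples for one random sequence, with independence extracted from the maximal element of the symmetric difference of the two shift sets). Your write-up supplies the adaptation that the paper leaves implicit. The reduction to uniformity of $(z_1,\dots,z_m,z_1',\dots,z_m')$ on $\{-1,1\}^{2m}$, and from there to $\mathbb{F}_2$-linear independence of the exponent vectors (no nonempty sub-product identically $1$), is the standard character reformulation and is in the spirit of \cite{AKMMR}. The genuinely new content, which you identify correctly, is the two-stage elimination: the anchor bits $e_n(s_1)$ live in positions $\leq m$ while every factor of $v_\ell$ and $v_{\ell'}$ lives in positions $\geq m+1$, so reading off their exponents forces $P=Q=\emptyset$ when $s_1\neq s_1'$ and $P=Q=R$ when $s_1=s_1'$; and then the identity $\rho(x)\bigl(\sigma_s(x)+\sigma_s'(x)\bigr)=0$ in the integral domain $\mathbb{F}_2[x]$ forces $T_D(s)=T_{D'}(s)$ for every $s$. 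The polynomial step is an algebraic repackaging of the maximal-index argument of \cite{AKMMR} (equivalently, look at the top-degree coefficient of $\rho\cdot(\sigma_s+\sigma_s')$), so the two routes are equivalent in substance, but yours handles all sequences $s$ uniformly and is easier to verify.

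One small caveat, which concerns the statement rather than your proof: what the argument actually shows is that the two events carry the same anchor $s_1=s_1'$ and the same sequence--shift data $T_D(s)=T_{D'}(s)$ for all $s$. If $D\neq D'$ merely because the blocks attached to $s_2,\dots,s_\ell$ are listed in a different order, the two events coincide and the intersection has probability $p$, not $p^2$. This imprecision is already present in the lemma as stated and is harmless for the application (the union is effectively over unordered data, cf.\ the factor $\binom{|\S|}{\ell}$ in the count $\mathcal{D}_k(\S)$), so it does not affect the validity of your argument.
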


Let $\mathcal{D}_k(\S)$ be the number of possible $\ell$, $s_1,s_2,\dots, s_\ell$ and $D$, then
by Lemmas \ref{lemma:unio} and \ref{lemma:A-independent} we get that
\begin{align}
 &\P\left( \widetilde{\Phi}_k(G)\geq r_k(m,\S) \right) \notag\\
 &\geq \P \left(\bigcup_{\ell=2}^k \bigcup_{\substack{k_2,\dots, k_\ell\geq 1\\k_2,\dots, k_\ell =k-1}} \bigcup_{s_1,s_2,\dots, s_\ell}\bigcup_{D}A_\ell(s_1,s_2,\dots, s_{k_\ell},D) 
 \right)\geq 1-\frac{2}{p\cdot \mathcal{D}_k(\S)}.\label{eq:majdnemvege}
\end{align}
Finally,  we give a lower bound to \eqref{eq:majdnemvege} for $|\S|<m$ and $|\S|\geq m$ separately.
If $|\S|<m$, then
\begin{align*}
 &p \cdot \mathcal{D}_k(\S)  =p \sum_{\ell=2}^{k}\sum_{\substack{k_2,\dots, k_\ell\geq 1\\k_2,\dots, k_\ell =k-1}}\sum_{s_1,s_2,\dots, s_\ell}\sum_{D} 1\notag \\
 =& p \sum_{\ell=2}^{k}\sum_{\substack{k_2,\dots, k_\ell\geq 1\\k_2,\dots, k_\ell =k-1}}\sum_{s_1,s_2,\dots, s_\ell} \binom{m+1}{k_2}\binom{m+1}{k_3}\dots \binom{m+1}{k_\ell}\notag \\
 \geq & p \sum_{\ell=2}^{k} \binom{k-2}{\ell-2}\binom{|\S|}{\ell}m^{\ell-2}\binom{m+1}{k-1}\notag \\
 \geq & p \binom{|\S|}{2} \binom{m+1}{k-1}\sum_{\ell=2}^{k} \binom{k-2}{\ell-2} |\S|^{\ell -2}\notag \\
 \geq &\frac{1}{4} p  \binom{m+1}{k-1} (|\S|)^{k}\geq \frac{1}{2}k^2 \log N
\end{align*}
Similarly, for $|\S|\geq m$ we have
\begin{align*}
 &p \cdot \mathcal{D}_k(\S)= p \sum_{\ell=2}^{k}\sum_{\substack{k_2,\dots, k_\ell\geq 1\\k_2,\dots, k_\ell =k-1}}\sum_{s_1,s_2,\dots, s_\ell}\sum_{D} 1\notag \\
 \geq& p \sum_{s_1,s_2,\dots, s_k} \binom{m+1}{1}\dots \binom{m+1}{1}
 = p \binom{|\S|}{k}(m+1)^{k-1}\geq \frac{1}{2} k^2 \log N
\end{align*}
which proves the result.
\end{proof}

\section*{Acknowledgements}

The author is partially supported by the Austrian Science Fund FWF Project F5511-N26 
which is part of the Special Research Program "Quasi-Monte Carlo Methods: Theory and Applications" and by Hungarian National Foundation for Scientific Research,
Grant No. K100291.

\end{document}